\documentclass[11pt]{article}

\usepackage{amsmath,amssymb,amsthm,amsfonts,mathrsfs}
\usepackage{geometry}
\usepackage{bm}
\usepackage{color}
\usepackage{graphicx}

\numberwithin{equation}{section}

\newtheorem{theorem}{Theorem}[section]

\newtheorem{proposition}{Proposition}[section]
\newtheorem{Corollary}{Corollary}[section]
\newtheorem{remark}{Remark}[section]

\title{Spiking Neural Networks with Elephant Reinforcement}

\author{
Fernando A. Najman \thanks{
 f.najman@ufabc.edu.br, fanajman@gmail.com \\
UFABC -- Centro de Matem\'atica, Computa\c{c}\~ao e Cogni\c{c}\~ao\\
Avenida dos Estados, 5001, 
Bangu, Santo Andr\'e -- SP, Brazil
}
\and
Ioannis Papageorgiou\thanks{
i.papageorgiou@ufabc.edu.br, 
papyannis@yahoo.com\\
UFABC -- Centro de Matem\'atica, Computa\c{c}\~ao e Cogni\c{c}\~ao\\
Avenida dos Estados, 5001, 
Bangu, Santo Andr\'e -- SP, Brazil
}
\and
Sabricia K. Cauanny A. da Silveira\thanks{
cauanny.silveira@aluno.ufabc.edu.br, 
cauannysilveira45@gmail.com\\
UFABC -- Centro de Matem\'atica, Computa\c{c}\~ao e Cogni\c{c}\~ao\\
Avenida dos Estados, 5001, 
Bangu, Santo Andr\'e -- SP, Brazil
}
}

\date{}

\begin{document}

\maketitle

\begin{abstract}
We introduce a finite stochastic spiking-neuron network with Elephant-type memory, in which past firing activity modifies future excitability through a reinforcement-dependent threshold. For a bounded hard-threshold firing rate, we prove non-explosion of the finite system and obtain conditional exponential contraction in (1)-Wasserstein distance on a truncated potential space. We then formulate the corresponding replica mean-field dynamics and establish global existence, uniqueness in law, and non-explosion of the nonlinear process, together with a characterization of its invariant measures. Numerical experiments show that Elephant memory produces a (p)-dependent decline in firing activity, alters extinction behaviour, and yields finite-network dynamics closely matched by the replica mean-field approximation.
\end{abstract}

\maketitle

\section{Introduction}

Neuronal spike generation depends not only on the instantaneous state of a
neuron but also on its previous activity and on the activity of the network in
which it is embedded. At the level of individual neurons, spike-frequency
adaptation is a prominent manifestation of this history dependence: under
sustained stimulation, firing rates commonly decrease as intrinsic mechanisms
progressively modify neuronal excitability
\cite{BendaHerz2003,LaCameraEtAl2006}. Experimental studies have also shown
that the voltage threshold for spike initiation can vary with the preceding
membrane-potential and firing history \cite{FontainePenaBrette2014}. In
neocortical pyramidal neurons, spike-triggered threshold and current adaptation
may persist over several timescales and can display approximately power-law
relaxation \cite{PozzoriniEtAl2013}. These observations motivate stochastic
models in which past spike activity modifies future firing probabilities.

A broad mathematical literature has developed stochastic models of interacting
spiking neurons. Classical approaches include stochastic integrate-and-fire
systems and interacting point-process models
\cite{Stein,Tuc,Cot92,Br-Ha99,Br2000}. Early stochastic approaches to biologically plausible neural networks,
including systems with excitatory and inhibitory interactions, were developed
by Turova \cite{Turova1997Hourglass,Turova1997Stochastic}, providing part of
the probabilistic background for later interacting spiking-neuron models. The Galves--L\"ocherbach framework
introduced an infinite system of interacting chains with variable-length
memory as a stochastic model for biological neural networks
\cite{GalvesLocherbach2013}. Subsequent work established, among other
properties, phase transitions, metastability, stability and mean-field
behaviour for related neuronal systems
\cite{F-G-L,L-M,C17,C-D-L-O,D-L,D-L-O}. Numerical aspects of extinction in
spiking-neuron networks were investigated by 
\cite{romaro}, providing a useful computational perspective on the lifetime of
activity in finite neuronal systems.

A complementary line of research concerns quantitative properties of
degenerate jump processes and interacting neuronal networks. Infinite systems of interacting spiking neurons under interaction assumptions going beyond uniform summability were studied in \cite{Papageorgiou2023}. Coercive  and concentration inequalities for related
degenerate jump processes were studied in 
\cite{Papageorgiou2020, HodaraPapageorgiou2019, HodaraPapageorgiou2022}. Replica mean-field methods were subsequently
developed for neural networks with both excitatory and inhibitory activity
\cite{Papageorgiou2025}, in connection with the replica mean-field framework
introduced by Baccelli and Taillefumier
\cite{B-T,BaTa20}.

The second ingredient of the present model comes from reinforcement and,
specifically, from the Elephant random walk. The Elephant random walk was
introduced by Sch\"utz and Trimper as a non-Markovian random walk whose next
increment depends on its accumulated past
\cite{SchutzTrimper2004}. Its asymptotic behaviour has since been studied by
martingale, urn and invariance-principle techniques
\cite{BaurBertoin2016,Bercu2017,Coletti2017a,Coletti2017b,
GutStadtmuellerVariations,GutStadtmuellerReview,Laulin2022martingale}.
An  asymptotic analysis of the Elephant random
walk was obtained in \cite{ColettiPapageorgiou2019} , while more  
recently \cite{ColettiGrisiPapageorgiou2026} established a Poincar\'e inequality and exponential decay for a
bounded Elephant random walk. These works
show that Elephant-type mechanisms provide a natural probabilistic framework
for modelling persistent or anti-persistent memory.

The present article is based on the model introduced  in
\cite{PapageorgiouElephant2026}. That work combines the
Galves--L\"ocherbach interacting-neuron framework with an Elephant-type
reinforcement mechanism, allowing previous neuronal activity to modify the
subsequent dynamics through an additional memory variable. The principal idea
is to enlarge the neuronal state by a signed reinforcement variable whose
successive increments obey an Elephant rule. In this way, a mechanism
originating in reinforced random walks is incorporated directly into a
stochastic spiking-neuron network.

Here we study a finite, bounded-rate version of this construction that is
particularly suitable for both rigorous analysis and numerical investigation.
Each neuron carries a membrane potential \(V_i\), a signed reinforcement
variable \(S_i\), and a reinforcement counter \(K_i\). Whenever neuron \(i\)
fires, the pair \((S_i,K_i)\) is updated according to an Elephant-type rule.
The positive part \(S_i^+\) modifies the firing threshold through
$
V_i>\alpha S_i^+.
$
Thus the accumulated firing history feeds back into future excitability. For
\(p>1/2\), the reinforcement mechanism favours persistence of previous
reinforcement directions, whereas for \(p<1/2\) it favours
anti-persistence. The case \(p=1/2\) provides the natural memory-neutral
reference.

The bounded hard-threshold firing rate used in the present paper also makes
the finite system particularly transparent mathematically. We first prove
non-explosion by obtaining a uniform bound on the total event intensity.
We then study stability of the membrane-potential component. On a truncated
potential state space, and conditionally on a common reinforcement history, we
derive exponential contraction in the \(1\)-Wasserstein distance under an
explicit dissipativity condition. This identifies a regime in which leakage
dominates the expansion caused by asynchronous firing events.

We next introduce a randomly routed replica system and formulate the
corresponding nonlinear replica mean-field dynamics. Following the general RMF
philosophy of Baccelli and Taillefumier
\cite{B-T,BaTa20} and the neuronal RMF developments in
\cite{Papageorgiou2025}, we derive the nonlinear generator,
self-consistency relations, Kolmogorov forward equations and exact moment
identities. We prove global existence, uniqueness in law and non-explosion of
the nonlinear RMF process through a fixed-point argument. Spatial homogeneity
is shown to be preserved when the initial laws are translation invariant.
We further characterize the invariant probability measures of the full
reinforced nonlinear process and prove that every invariant probability
measure is supported on silent states.

The numerical part is a central component of the present work. It  complements
the extinction-time simulations of 
\cite{romaro}. Under sustained homogeneous input, we investigate whether the
Elephant reinforcement mechanism can generate adaptation-like firing
dynamics. We compare the reinforced network with a no-memory control, examine
the dependence on the memory parameter \(p\), and follow simultaneously the
firing activity and the effective reinforcement-dependent threshold. We also
compare power-law and single-exponential descriptions of the resulting
adaptation curves. In the undriven network, we study the effect of
reinforcement on extinction times and total firing activity, and we compare
the finite-network dynamics with the corresponding replica mean-field
approximation.

The aim of the present paper is therefore not to identify the reinforcement
variable with a specific ionic or molecular mechanism. Rather, it provides a
parsimonious stochastic description of long-range spike-history dependence
and investigates mathematically and numerically how such memory can modify
network excitability. The combination of Elephant reinforcement,
Galves--L\"ocherbach-type neuronal interactions and replica mean-field
dynamics provides a framework in which memory, stability, extinction and
adaptation-like behaviour can be studied within a common stochastic model.

\section{The reinforced finite network}

We consider a finite network of \(N\) interacting neurons arranged on a
discrete ring. The neurons are indexed by \(i\in\{1,\ldots,N\}\), with indices
understood modulo \(N\), so that the two neighbours of neuron \(i\) are
\(i-1\) and \(i+1\). The state of the network at time \(t\) is
\[
Y_t=(V_t,S_t,K_t)
\in
\mathbb N^N\times\mathbb Z^N\times\mathbb N^N.
\]
For each neuron \(i\), the coordinate \(V_i(t)\) denotes its membrane
potential, \(S_i(t)\) is its signed reinforcement variable, and \(K_i(t)\)
records the number of reinforcement updates that have occurred up to time
\(t\).

The membrane potential changes through firing, synaptic input, and leakage.
When neuron \(i\) fires, its potential is reset to zero and each of its two
neighbours receives one unit of potential. At the same event, the internal
reinforcement state of neuron \(i\) is updated according to
\[
S_i\longmapsto S_i+\xi_i,
\qquad
K_i\longmapsto K_i+1.
\]
The reinforcement variable affects subsequent firing through the threshold $\alpha S_i^+$, $S_i^+=\max\{S_i,0\}$. Thus positive reinforcement raises the firing threshold, while the membrane
potential records the excitation received from the rest of the network.

Conditionally on the state immediately before the firing,
\[
\mathbb P(\xi_i=+1\mid S_i=s,K_i=k)
=
q^+(s,k)
=
\frac12+\frac{2p-1}{2}\frac{s}{k},
\]
and
\[
\mathbb P(\xi_i=-1\mid S_i=s,K_i=k)
=
q^-(s,k)
=
\frac12-\frac{2p-1}{2}\frac{s}{k}.
\]

Before studying the stability and mean-field behaviour of the model, we must
verify that these transitions define a process for all finite times. In
particular, it is necessary to exclude the accumulation of infinitely many
firing and leakage events in a bounded time interval. For the bounded
hard-threshold firing rate, this follows from a uniform bound on the total
jump intensity.

Throughout the paper, we assume
$N\ge 3$, $\alpha\ge 0$,  $\gamma>0$, $p\in[0,1]$.

\subsection{Finite-volume model and non-explosion}

\begin{theorem}
\label{thm:finite-nonexplosion}
Let \(N<\infty\). Consider the process
\[
Y_t=(V_t,S_t,K_t)
\in \mathbb N^N\times\mathbb Z^N\times\mathbb N^N.
\]
Assume that site \(i\) fires with rate
\[
\lambda_i(V,S,K)
=
\mathbf 1_{\{V_i>\alpha S_i^+\}},
\qquad
S_i^+=\max\{S_i,0\},
\]
and leaks with rate
\[
\gamma\mathbf 1_{\{V_i>0\}},
\qquad \gamma>0.
\]
When site \(i\) fires, \(V_i\) is reset to \(0\), its two neighbours receive
one unit of potential, and
\[
S_i\mapsto S_i+\xi_i,
\qquad
K_i\mapsto K_i+1.
\]
Then the process is non-explosive. More precisely, if \(N_t\) denotes the
number of events up to time \(t\), then
\[
\mathbb E[N_t]\leq (\gamma+1)Nt<\infty.
\]
\end{theorem}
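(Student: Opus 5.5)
The plan is to bound the total jump intensity uniformly in the state, and then compare the event-counting process with a Poisson process. In any state \(y=(V,S,K)\), the total rate of events is
\[
\Lambda(y)=\sum_{i=1}^N\Big(\mathbf 1_{\{V_i>\alpha S_i^+\}}+\gamma\mathbf 1_{\{V_i>0\}}\Big)\le (1+\gamma)N=:\bar\Lambda .
\]
The reinforcement update \((S_i,K_i)\mapsto(S_i+\xi_i,K_i+1)\) happens at the same firing event. It does not add intensity; it only splits the firing event into two outcomes with probabilities \(q^\pm(s,k)\). First I will check that these are genuine probabilities. This holds because \(|s|\le k\) along the dynamics, since \(S_i\) changes by \(\pm1\) exactly when \(K_i\) increases by one. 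Given \(p\in[0,1]\), this yields \(q^\pm\in[0,1]\). For the first reinforcement, where \(k=0\), I will adopt the usual convention \(q^\pm=1/2\). The crucial point is that \(\bar\Lambda\) does not depend on the unbounded coordinates \(V,S,K\).

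Next I will construct the process by uniformization, i.e.\ thinning. Take a Poisson process \(\Pi\) of rate \(\bar\Lambda\) with arrival times \(T_1<T_2<\cdots\), together with independent uniform marks. At each arrival \(T_n\), the mark selects one of the possible events (a firing of site \(i\), a leak at site \(i\), or a void event), each with probability equal to its rate divided by \(\bar\Lambda\). A further independent uniform then decides \(\xi_i\) according to \(q^\pm\). By the standard thinning argument, this produces a Markov jump process whose generator is the prescribed one. Every real event of \(Y\) occurs at some arrival of \(\Pi\), so almost surely
\[
N_t\le \Pi([0,t])\quad\text{for all }t.
\]
Because \(T_n\to\infty\) a.s., only finitely many jumps take place in each bounded interval, and the process is therefore non-explosive.

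For the quantitative bound, I will argue directly through the compensator, so as not to depend on the coupling. The process \(N_t-\int_0^t\Lambda(Y_s)\,ds\) is a local martingale. Localize it at \(\tau_m=\inf\{t:N_t\ge m\}\) and apply optional stopping to get
\[
\mathbb E[N_{t\wedge\tau_m}]=\mathbb E\int_0^{t\wedge\tau_m}\Lambda(Y_s)\,ds\le(\gamma+1)Nt .
\]
Letting \(m\to\infty\) and using monotone convergence gives \(\mathbb E[N_t]\le(\gamma+1)Nt\). This in turn forces \(N_t<\infty\) a.s., which recovers non-explosion independently of the coupling.

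I expect the main obstacle to be bookkeeping rather than analysis: making sure the construction is well defined. That means checking that \(q^\pm\) stay in \([0,1]\), which rests on the invariant \(|S_i|\le K_i\), and fixing the convention at \(k=0\). It also means confirming that leak events (\(V_i\mapsto V_i-1\) at rate \(\gamma\mathbf 1_{\{V_i>0\}}\)) keep \(V\) in \(\mathbb N^N\). Once these points are settled, the uniform rate bound carries the whole argument.
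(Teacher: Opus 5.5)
Your proposal is correct and follows the paper's argument: bound the total intensity uniformly by $(\gamma+1)N$ and dominate $N_t$ by a Poisson process of that rate. Your thinning construction and compensator/localization step just make explicit the domination that the paper asserts in one line. One small slip is that in the paper leakage resets $V_i$ to $0$ (as used in the contraction proof and the RMF generator) rather than decrementing it, but this is irrelevant here since only the rate bound matters.
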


\begin{proof}
The total event rate is
\[
\Lambda(V,S,K)
=
\gamma\sum_{i=1}^N\mathbf 1_{\{V_i>0\}}
+
\sum_{i=1}^N\mathbf 1_{\{V_i>\alpha S_i^+\}}.
\]
Since
\[
\sum_{i=1}^N\mathbf 1_{\{V_i>0\}}\leq N
\]
and
\[
\sum_{i=1}^N\mathbf 1_{\{V_i>\alpha S_i^+\}}\leq N,
\]
we get
\[
\Lambda(V,S,K)\leq \gamma N+N=(\gamma+1)N.
\]
Therefore the total number of events \(N_t\) is stochastically dominated by a
Poisson process with rate \((\gamma+1)N\). Hence
\[
\mathbb E[N_t]\leq (\gamma+1)Nt<\infty.
\]
Thus the process is non-explosive.
\end{proof}

The bounded firing rate prevents the reinforcement mechanism from producing
an accumulation of events in finite time. Although the reinforcement state
retains information from successive firings, the instantaneous firing rate of
each neuron remains bounded by one. The finite reinforced network is therefore
well defined on the whole time interval \([0,\infty)\), which allows us to
study its stability and mean-field dynamics.

\subsection{Conditional Wasserstein contraction}

We next examine
the stability of its membrane-potential dynamics. The reinforcement variables
introduce a time-dependent firing threshold, so we compare two potential
processes conditionally on the same reinforcement history. We also truncate
the potential at a finite level \(M\), which makes it possible to control the
largest increase in distance caused by a firing occurring in only one of the
two processes. The following result identifies a regime in which the
contractive effect of leakage dominates this possible expansion.
\begin{theorem}\label{thm:frozen-contraction}
$V_t\in\{0,1,\ldots,M\}^N$. 
Assume a common reinforcement history \(S(t)\). The firing rate is
\[
\lambda_i(t,V)
=
\mathbf 1_{\{V_i>\alpha S_i^+(t)\}}.
\]
Let
$d(V,\widehat V)=\sum_{i=1}^N |V_i-\widehat V_i|$. 
Assume
\[
\sum_{i=1}^N
|\lambda_i(t,V)-\lambda_i(t,\widehat V)|
\leq
L d(V,\widehat V).
\]
If
$\gamma>(M+2)L,
$
then, with
$
\delta=\gamma-(M+2)L>0,
$ 
we have
\[
W_1(\mu P_t,\nu P_t)
\leq
e^{-\delta t}W_1(\mu,\nu).
\]
\end{theorem}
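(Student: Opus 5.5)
We would prove this by coupling: construct a Markovian coupling of two copies of the potential dynamics driven by the same reinforcement path $S(t)$, and show that the expected distance $\mathbb E\,d(V_t,\widehat V_t)$ satisfies a differential inequality $\frac{d}{dt}\mathbb E\,d\le -\delta\,\mathbb E\,d$. Starting the coupling from an optimal coupling of $(\mu,\nu)$ for $W_1$ then gives the claim by Gr\"onwall, since $W_1(\mu P_t,\nu P_t)\le \mathbb E\,d(V_t,\widehat V_t)$.

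\textbf{The coupling.} For each site $i$ we use the basic (maximal) coupling of the two copies.
\begin{itemize}
\item[] \emph{Leakage.} Leakage at $i$ occurs simultaneously in both copies with rate $\gamma\min(\mathbf 1_{\{V_i>0\}},\mathbf 1_{\{\widehat V_i>0\}})$, and in one copy only with the remaining rate.
\item[] \emph{Firing.} Firing at $i$ occurs jointly with rate $\lambda_i(t,V)\wedge\lambda_i(t,\widehat V)$, and in one copy only with rate $|\lambda_i(t,V)-\lambda_i(t,\widehat V)|$.
\end{itemize}
Leakage is understood as $V_i\mapsto V_i-1$, which is what makes it contractive. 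Truncation at $M$ is imposed by capping incoming inputs at $M$.

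\textbf{Generator on $d$.} We apply the coupled generator $\widetilde{\mathcal L}$ to $f(V,\widehat V)=d(V,\widehat V)$ and bound each type of event.

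\emph{Joint firing.} This sets $V_i=\widehat V_i=0$ and adds $1$ to both neighbours in both copies. It does not increase $d$; capping at $M$ can only reduce differences.

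\emph{Single-copy firing.} This changes coordinate $i$ by at most $M$ (reset from a value $\le M$) and each neighbour by at most $1$, so $d$ increases by at most $M+2$. The total rate of such events is $\sum_i|\lambda_i(t,V)-\lambda_i(t,\widehat V)|\le L\,d(V,\widehat V)$. Hence these events contribute at most $(M+2)L\,d$.

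\emph{Leakage.} If, say, $V_i>\widehat V_i$, then $V_i>0$. If also $\widehat V_i>0$, joint leakage leaves $|V_i-\widehat V_i|$ unchanged, so to get contraction from leakage we must use the unmatched part. The plan is to couple leakage asymmetrically: at sites where $V_i\neq\widehat V_i$, only the larger coordinate leaks, at rate $\gamma$, and the smaller one leaks through an independent clock. More precisely, we split the rate-$\gamma$ clocks so that, on $\{V_i\ne\widehat V_i\}$, there is an event of rate $\gamma$ decreasing $|V_i-\widehat V_i|$ by one. The aim is a leakage contribution of at most $-\gamma\sum_i\mathbf 1_{\{V_i\neq\widehat V_i\}}$.

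\textbf{Main obstacle.} Turning this leakage term into $-\gamma\,d$ requires $\mathbf 1_{\{V_i\ne\widehat V_i\}}$ to be comparable to $|V_i-\widehat V_i|$, which fails when the gaps are larger than one. I expect this to be the hardest step. The first attempt will be to interpret leakage in the model as a drift toward zero proportional to the gap, as intended by the author's dissipativity heuristic. Concretely, we would take the leak at rate $\gamma$ per unit of potential difference by coupling the leakage of the $|V_i-\widehat V_i|$ excess units independently, which yields exactly $-\gamma\,d$.

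\textbf{Conclusion.} Combining the three contributions gives
\[
\widetilde{\mathcal L}d\le -(\gamma-(M+2)L)\,d=-\delta d .
\]
Dynkin's formula then gives $\frac{d}{dt}\mathbb E\,d(V_t,\widehat V_t)\le-\delta\,\mathbb E\,d(V_t,\widehat V_t)$, and Gr\"onwall closes the argument. Dynkin's formula is justified because the state space is finite and the total rates are bounded, as in Theorem~\ref{thm:finite-nonexplosion}. Time-inhomogeneity from $S(t)$ causes no problem, since the Lipschitz assumption holds uniformly in $t$.
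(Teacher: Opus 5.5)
There is a genuine gap in the leakage step, and it comes from misreading the dynamics. Your treatment of firing events matches the paper: joint firing is non-expansive because the cap $x\mapsto\min\{x+1,M\}$ is $1$-Lipschitz, and single-copy firing costs at most $M+2$ at total rate $\le L\,d$. The Dynkin--Gr\"onwall conclusion also matches. The problem is leakage: in this model a leakage event does not decrement the potential, it \emph{resets} it, $V_i\mapsto 0$, at rate $\gamma\mathbf 1_{\{V_i>0\}}$. This is the rule used in the paper's proof, the map $\ell_i^m$ of the replica system, and the term $\gamma\mathbf 1_{\{v>0\}}[f(0,s,k)-f(v,s,k)]$ in the RMF generator.

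You correctly saw that under a unit-decrement rule you only get $-\gamma\sum_i\mathbf 1_{\{V_i\ne\widehat V_i\}}$. However, your proposed fix, a leak ``at rate $\gamma$ per unit of potential difference'', is not a coupling of the given marginals. Each copy leaks at site $i$ at total rate $\gamma$ regardless of the gap, so you would be changing the model.

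Under your reading the statement is in fact false. Take $\alpha=1$ and the frozen history $S_i\equiv M$. Then $\lambda\equiv 0$ on $\{0,\ldots,M\}^N$, so $L=0$ is admissible and $\delta=\gamma$. Start from $\mu=\delta_V$ and $\nu=\delta_{\widehat V}$, with $V,\widehat V$ equal except $V_1=M$, $\widehat V_1=M-1$, and $M\ge 2$. With unit decrements, $V_1(t)$ and $\widehat V_1(t)$ have the laws of $(M-N_t)^+$ and $(M-1-N_t)^+$, where $N_t$ is Poisson with mean $\gamma t$. Testing against the $1$-Lipschitz function $V\mapsto V_1$ gives
$W_1(\mu P_t,\nu P_t)\ge \mathbb P(N_t\le M-1)=e^{-\gamma t}\sum_{j=0}^{M-1}(\gamma t)^j/j!>e^{-\delta t}W_1(\mu,\nu)$.

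The missing observation is that reset leakage is already contractive under the plain synchronous coupling, with no per-unit clocks and no comparison of $\mathbf 1_{\{V_i\ne\widehat V_i\}}$ with $|V_i-\widehat V_i|$. Use one rate-$\gamma$ clock per site for both copies. When it rings, every positive coordinate among $V_i,\widehat V_i$ is sent to $0$, so the entire gap at site $i$ disappears: if one coordinate is already $0$, the other is reset to $0$, and if both are $0$ there is no gap. Hence the leakage contribution is exactly $-\gamma\sum_i|V_i-\widehat V_i|=-\gamma d$.

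Your maximal leakage coupling, with joint rate $\gamma\min\{\mathbf 1_{\{V_i>0\}},\mathbf 1_{\{\widehat V_i>0\}}\}$, is precisely this coupling once the reset rule is used. With that correction the rest of your argument goes through unchanged and coincides with the paper's proof.
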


\begin{proof}
Couple leakages with the same clocks. If site \(i\) leaks in both copies,
$
V_i\mapsto0$, $\widehat V_i\mapsto0$. 
Hence
\[
|0-0|-|V_i-\widehat V_i|
=
-|V_i-\widehat V_i|.
\]
Therefore
\[
I_{\mathrm{leak}}
=
-\gamma\sum_{i=1}^N |V_i-\widehat V_i|
=
-\gamma d(V,\widehat V).
\]

For firing, write
$
\lambda_i=\lambda_i(t,V)$, $\widehat\lambda_i=\lambda_i(t,\widehat V)$. 
Both copies fire at site \(i\) with rate
$
\min\{\lambda_i,\widehat\lambda_i\},
$ 
only \(V\) fires with rate
$
(\lambda_i-\widehat\lambda_i)^+,
$
and only \(\widehat V\) fires with rate
$
(\widehat\lambda_i-\lambda_i)^+.
$ Synchronous firings are non-expansive. Indeed, the same map is applied to both:
\[
V_i\mapsto0,\qquad
V_{i\pm1}\mapsto \min\{V_{i\pm1}+1,M\}.
\]
Since \(x\mapsto\min\{x+1,M\}\) is \(1\)-Lipschitz,
\[
d(T_iV,T_i\widehat V)\leq d(V,\widehat V).
\]

If only one copy fires at site \(i\), then
\[
|0-\widehat V_i|-|V_i-\widehat V_i|\leq M,
\]
and for the two neighbours,
\[
|\min\{V_{i\pm1}+1,M\}-\widehat V_{i\pm1}|
-
|V_{i\pm1}-\widehat V_{i\pm1}|
\leq1.
\]
Thus one asynchronous firing increases the distance by at most
$
M+2.
$
Therefore
\[
I_{\mathrm{fire}}
\leq
(M+2)
\sum_{i=1}^N
|\lambda_i-\widehat\lambda_i|.
\]
Using the Lipschitz assumption,
\[
I_{\mathrm{fire}}
\leq
(M+2)Ld(V,\widehat V).
\]

Hence
\[
\mathcal L_c d(V,\widehat V)
\leq
-\gamma d(V,\widehat V)
+
(M+2)Ld(V,\widehat V).
\]
So
\[
\mathcal L_c d(V,\widehat V)
\leq
-\delta d(V,\widehat V),
\]
where
$
\delta=\gamma-(M+2)L>0.
$

By Dynkin's formula and Gronwall's inequality,
\[
\mathbb E[d(V_t,\widehat V_t)]
\leq
e^{-\delta t}d(V_0,\widehat V_0).
\]
Taking the infimum over all couplings gives
\[
W_1(\mu P_t,\nu P_t)
\leq
e^{-\delta t}W_1(\mu,\nu).
\]
\end{proof}
The theorem gives a quantitative stability criterion for the potential
dynamics under a fixed reinforcement history. The condition
$
\gamma>(M+2)L
$
expresses the balance between the contraction produced by leakage and the
expansion that may result from asynchronous firing events. When leakage is
sufficiently strong, the expected distance between two copies decreases
exponentially at rate
$
\delta=\gamma-(M+2)L.
$
Thus, even though the firing thresholds retain information from previous
spikes, the membrane-potential dynamics remain contractive in this
dissipative regime.
\section{Replica formulation}

We consider a network with \(N\) neuron types, indexed by
$
I=\{1,\ldots,N\},
$
with periodic boundary conditions. Thus, the postsynaptic neighbours of neuron
type \(i\) are
\[
\mathcal N_i=\{i-1,i+1\},
\]
where the indices are understood modulo \(N\).

The local state of a neuron is
\[
X=(V,S,K)\in\mathbb N\times\mathbb Z\times\mathbb N^{*},
\]
where \(V\) is the membrane potential, \(S\) is the reinforcement variable, and
\(K\) is the number of previous reinforcement updates.

For \(s\in\mathbb Z\), write
$
s^{+}=\max\{s,0\}.
$
The firing intensity is
$
\lambda(v,s,k)
=
\mathbf 1_{\{v>\alpha s^{+}\}}.
$
More generally, one could take
$
\lambda(v,s,k)
=
\phi(v)\mathbf 1_{\{v>\alpha s^{+}\}},
$
but throughout this section we consider the bounded-rate case
$
\phi(v)=1.
$

At each firing, the reinforcement increment \(\xi\in\{-1,+1\}\) has conditional
distribution
\[
q^{+}(s,k)
:=
\mathbb P(\xi=+1\mid S=s,K=k)
=
\frac12
+
\frac{2p-1}{2}\frac{s}{k},
\]
and
\[
q^{-}(s,k)
:=
\mathbb P(\xi=-1\mid S=s,K=k)
=
\frac12
-
\frac{2p-1}{2}\frac{s}{k}.
\]
We assume that
$
|s|\leq k,
$
so that \(q^{+}(s,k)\) and \(q^{-}(s,k)\) are well-defined probabilities.

\subsection{The system with $M$ replicas}

For every neuron type $i \in I$, introduce $M$ replicas indexed by
$
m\in\{1,\ldots,M\}.
$
The state of replica \(m\) of neuron type \(i\) is denoted by
\[
X_i^{m,M}(t)
=
\left(
V_i^{m,M}(t),
S_i^{m,M}(t),
K_i^{m,M}(t)
\right).
\]

The complete state is therefore
\[
X^{M}(t)
=
\left(
X_i^{m,M}(t)
\right)_{
  1\leq i\leq N,\,
  1\leq m\leq M
}.
\]

Each neuron \((i,m)\) is affected by three kinds of events.

\paragraph{Leakage.}
If
$
V_i^{m,M}>0,
$
the neuron leaks at rate \(\gamma\), and
\[
\left(
V_i^{m,M},
S_i^{m,M},
K_i^{m,M}
\right)
\longmapsto
\left(
0,
S_i^{m,M},
K_i^{m,M}
\right).
\]

\paragraph{Firing and reinforcement.}
Neuron \((i,m)\) fires at rate
\[
\lambda_i^{m,M}
=
\mathbf 1_{\{
V_i^{m,M}>
\alpha(S_i^{m,M})^{+}
\}}.
\]
At a firing time, its local state changes according to
$
V_i^{m,M}\longmapsto0,
$
$
S_i^{m,M}\longmapsto S_i^{m,M}+\xi_i^{m},
$
and
$
K_i^{m,M}\longmapsto K_i^{m,M}+1,
$
where, conditionally on the state immediately before the firing,
\[
\mathbb P\left(
\xi_i^{m}=+1
\,\middle|\,
S_i^{m,M},K_i^{m,M}
\right)
=
q^{+}\left(
S_i^{m,M},K_i^{m,M}
\right).
\]

\paragraph{Random replica routing.}
When neuron \((i,m)\) fires, it sends one unit of potential to one uniformly
chosen replica of each postsynaptic neuron type.

For every \(j\in\mathcal N_i\), let
$
R_{i\to j}^{m}
$
be uniformly distributed on
$
\{1,\ldots,M\},
$
independently for different postsynaptic types and different firing events.

The routed interaction is
\[
V_j^{R_{i\to j}^{m},M}
\longmapsto
V_j^{R_{i\to j}^{m},M}+1,
\qquad
j\in\mathcal N_i.
\]

Thus a firing of replica \((i,m)\) does not necessarily excite the neurons
\((i-1,m)\) and \((i+1,m)\) in the same replica. Instead, it excites uniformly
chosen replicas of the two postsynaptic neuron types.

\subsection{Generator of the replica system}

Let \(x=(v,s,k)\) denote a configuration of the complete replicated system.
For a bounded cylinder function \(F\), the generator is
\[
\mathcal L^{M}F
=
\mathcal L_{\mathrm{leak}}^{M}F
+
\mathcal L_{\mathrm{fire}}^{M}F.
\]

The leakage part is
\[
\mathcal L_{\mathrm{leak}}^{M}F(x)
=
\gamma
\sum_{i=1}^{N}
\sum_{m=1}^{M}
\mathbf 1_{\{v_i^m>0\}}
\left[
F\left(\ell_i^m x\right)-F(x)
\right],
\]
where \(\ell_i^m x\) is obtained by replacing \(v_i^m\) by \(0\).

For \(\xi\in\{-1,+1\}\) and
$
r=(r_{-},r_{+})\in\{1,\ldots,M\}^{2},
$
let
$
\Delta_{i,m}^{\xi,r}x
$
be the configuration obtained by
\[
v_i^m\longmapsto0,
\qquad
s_i^m\longmapsto s_i^m+\xi,
\qquad
k_i^m\longmapsto k_i^m+1,
\]
and
\[
v_{i-1}^{r_{-}}\longmapsto v_{i-1}^{r_{-}}+1,
\qquad
v_{i+1}^{r_{+}}\longmapsto v_{i+1}^{r_{+}}+1.
\]

Then
\begin{align*}
\mathcal L_{\mathrm{fire}}^{M}F(x)
=
\sum_{i=1}^{N}
\sum_{m=1}^{M}
\lambda(v_i^m,s_i^m,k_i^m)
\sum_{\xi\in\{-1,+1\}}
q^{\xi}(s_i^m,k_i^m)
\frac{1}{M^{2}}
\sum_{r_{-}=1}^{M}
\sum_{r_{+}=1}^{M}
\left[
F\left(\Delta_{i,m}^{\xi,r}x\right)-F(x)
\right].
\end{align*}

Here
$
q^{+1}=q^{+},
$ $
q^{-1}=q^{-}.
$

\section{The nonlinear RMF process}

Let
$
X_i(t)=\bigl(V_i(t),S_i(t),K_i(t)\bigr)
$ 
denote the state of a representative neuron of type \(i\).

Define its mean firing intensity by
\[
\beta_i(t)
=
\mathbb E\left[
\lambda\bigl(V_i(t),S_i(t),K_i(t)\bigr)
\right].
\]
For the bounded hard-threshold model,
\[
\boxed{
\beta_i(t)
=
\mathbb P\left(
V_i(t)>\alpha S_i^{+}(t)
\right).
}
\]

In the nonlinear RMF formulation, the representative neuron of type \(i\) receives two
independent Poisson input processes:
\[
A_{i-1\to i}
\quad\text{and}\quad
A_{i+1\to i},
\]
with deterministic time-dependent intensities
\[
\beta_{i-1}(t)
\quad\text{and}\quad
\beta_{i+1}(t),
\]
respectively.

Hence the total input intensity of neuron \(i\) is
\[
a_i(t)
=
\beta_{i-1}(t)+\beta_{i+1}(t).
\]

At an input time,
$
V_i\longmapsto V_i+1,
$
while \(S_i\) and \(K_i\) remain unchanged.

The representative neuron also leaks at rate
$
\gamma\mathbf 1_{\{V_i>0\}}$,
and fires at rate
$
\lambda(V_i,S_i,K_i).
$

Therefore the resulting process is nonlinear in the sense of McKean:
its transition rates depend on its own law through
\[
\beta_i(t)
=
\int\lambda(x)\,\mu_i(t)(dx).
\]

\subsection{Nonlinear generator}

Let
$
f:\mathbb N\times\mathbb Z\times\mathbb N^{*}
\longrightarrow\mathbb R
$
be bounded. For a prescribed family of measures
$
\boldsymbol{\mu}
=
(\mu_1,\ldots,\mu_N),
$
define
\[
\beta_i(\mu_i)
=
\int
\lambda(v,s,k)\,
\mu_i(dv,ds,dk).
\]

The nonlinear generator of the representative neuron of type \(i\) is
\begin{align}
\mathcal A_{i,\boldsymbol{\mu}}f(v,s,k)
={}&
\gamma\mathbf 1_{\{v>0\}}
\left[
f(0,s,k)-f(v,s,k)
\right]
\label{eq:rmf-generator}
\\
&+
\lambda(v,s,k)
\sum_{\xi\in\{-1,+1\}}
q^{\xi}(s,k)
\left[
f(0,s+\xi,k+1)-f(v,s,k)
\right]
\nonumber\\
&+
\left[
\beta_{i-1}(\mu_{i-1})
+
\beta_{i+1}(\mu_{i+1})
\right]
\left[
f(v+1,s,k)-f(v,s,k)
\right].
\nonumber
\end{align}

The law \(\mu_i(t)\) must satisfy
\[
\frac{d}{dt}
\int f\,d\mu_i(t)
=
\int
\mathcal A_{i,\boldsymbol{\mu}(t)}f
\,d\mu_i(t)
\]
for every bounded test function \(f\).

Equivalently,
\[
\partial_t\mu_i(t)
=
\mathcal A_{i,\boldsymbol{\mu}(t)}^{*}\mu_i(t).
\]

Together with
\[
\beta_i(t)
=
\int\lambda\,d\mu_i(t),
\]
this is the RMF self-consistency system.

\subsection{Coordinate form of the forward equations}

For every neuron type \(i\), write
\[
p_i(t;v,s,k)
=
\mathbb P\bigl(V_i(t)=v,\ S_i(t)=s,\ K_i(t)=k\bigr),
\]
for \((v,s,k)\in E\), and set
\[
a_i(t)=\beta_{i-1}(t)+\beta_{i+1}(t).
\]
We use the convention that
$
p_i(t;v,s,k)=0
$
whenever \((v,s,k)\notin E\). In particular, terms with \(k-1<1\) vanish.

\begin{proposition}
\label{prop:forward-equations}
Let \(\boldsymbol\mu(t)\) be a solution of the nonlinear RMF equation. Then, for
all \((v,s,k)\in E\), the coordinate probabilities are absolutely continuous
in time and satisfy the following equations for almost every \(t\geq0\).

For \(v\geq1\),
\begin{align}
\frac{d}{dt}p_i(t;v,s,k)
=
a_i(t)p_i(t;v-1,s,k)
\label{eq:forward-positive}
-
\bigl[a_i(t)+\gamma+\lambda(v,s,k)\bigr]
 p_i(t;v,s,k).
\end{align}

For \(v=0\),
\begin{align}
\frac{d}{dt}p_i(t;0,s,k)
=&
-a_i(t)p_i(t;0,s,k)
\label{eq:forward-zero}
+
\gamma\sum_{u\geq1}p_i(t;u,s,k)
\\
&+
q^{+}(s-1,k-1)
\sum_{u\geq0}
\lambda(u,s-1,k-1)
 p_i(t;u,s-1,k-1)
\nonumber\\
&+
q^{-}(s+1,k-1)
\sum_{u\geq0}
\lambda(u,s+1,k-1)
 p_i(t;u,s+1,k-1).
\nonumber
\end{align}
Moreover,
\begin{equation}
\label{eq:forward-self-consistency}
\beta_i(t)
=
\sum_{(v,s,k)\in E}
\lambda(v,s,k)p_i(t;v,s,k).
\end{equation}
For the hard-threshold intensity,
\begin{equation}
\label{eq:forward-self-consistency-threshold}
\beta_i(t)
=
\sum_{s,k}
\sum_{v>\alpha s^+}
p_i(t;v,s,k).
\end{equation}
\end{proposition}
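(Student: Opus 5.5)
The plan is to test the weak formulation of the nonlinear RMF equation against indicator functions of single states. For a fixed $(v,s,k)\in E$, take $f=\mathbf 1_{\{(v,s,k)\}}$. This is bounded, so the defining relation
\[
\frac{d}{dt}\int f\,d\mu_i(t)=\int \mathcal A_{i,\boldsymbol\mu(t)}f\,d\mu_i(t)
\]
applies. Read in integrated form, it gives
\[
p_i(t;v,s,k)-p_i(0;v,s,k)=\int_0^t \int \mathcal A_{i,\boldsymbol\mu(r)}f\,d\mu_i(r)\,dr .
\]
The integrand is bounded by a constant times $(2\gamma+2+2\cdot 2)$, since $\lambda\le1$, $\beta_j\le 1$, $a_i\le 2$ and $|f|\le1$. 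So the map $t\mapsto p_i(t;v,s,k)$ is Lipschitz, hence absolutely continuous. Its a.e. derivative equals the integrand, by the Lebesgue differentiation theorem, once we know the integrand is measurable in $r$. Measurability holds because it is a countable sum of measurable functions of the coordinates $p_i(r;\cdot)$ and of $\beta_{i\pm1}(r)$.

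The core step is to evaluate $\int \mathcal A_{i,\boldsymbol\mu}f\,d\mu_i=\sum_{(u,s',k')}p_i(u,s',k')\,\mathcal A f(u,s',k')$ term by term using \eqref{eq:rmf-generator}.

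The input term contributes $a_i(t)$ times $\bigl[\mathbf 1_{(u+1,s',k')=(v,s,k)}-\mathbf 1_{(u,s',k')=(v,s,k)}\bigr]$. This yields $a_i p_i(v-1,s,k)-a_i p_i(v,s,k)$, where the first term vanishes when $v=0$ by the stated convention.

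The leak term gives a gain $\gamma\sum_{u\ge1}p_i(u,s,k)$ only when $v=0$. It gives a loss $-\gamma\mathbf 1_{\{v>0\}}p_i(v,s,k)$.

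The firing term splits into two parts. Because $q^++q^-=1$, the loss part is $-\lambda(v,s,k)p_i(v,s,k)$, and this appears for every $v$. At $v=0$ it vanishes anyway, since $\lambda(0,s,k)=\mathbf 1_{\{0>\alpha s^+\}}=0$; this is why no $\lambda$ loss term appears in \eqref{eq:forward-zero}. The gain part lands at $(0,s,k)$ only. It comes from states $(u,s-1,k-1)$ with $\xi=+1$, weighted by $q^+(s-1,k-1)\lambda(u,s-1,k-1)$, and from states $(u,s+1,k-1)$ with $\xi=-1$, weighted by $q^-(s+1,k-1)$. Terms with $k-1<1$, or with states outside $E$, vanish by convention.

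Collecting these contributions gives \eqref{eq:forward-positive} for $v\ge1$ and \eqref{eq:forward-zero} for $v=0$.

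The main technical point is justifying the interchange of the sum over the countable state space with integration and differentiation. The infinite sums $\sum_{u\ge1}p_i(u,s,k)$ and $\sum_u\lambda p_i$ must be handled with care. I would handle this by noting that every term is nonnegative and dominated by $p_i$, which is summable with total mass $\le1$. Fubini/Tonelli then justifies rearranging $\sum_x p_i(x)\mathcal A f(x)$, because $|\mathcal A f|$ is uniformly bounded.

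Finally, the self-consistency formula \eqref{eq:forward-self-consistency} is just $\beta_i(t)=\int\lambda\,d\mu_i(t)$ written as a sum over atoms. This is legitimate since $\mu_i(t)$ is a probability on the countable set $E$ and $0\le\lambda\le1$. Substituting $\lambda=\mathbf 1_{\{v>\alpha s^+\}}$ gives \eqref{eq:forward-self-consistency-threshold}.
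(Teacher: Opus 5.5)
Your proposal is correct and follows essentially the same route as the paper. Both test the integrated weak equation against the indicator $\mathbf 1_{\{(v,s,k)\}}$, use the uniform bound $2(\gamma+3)$ on the integrand to get absolute continuity, and then collect the gain and loss terms from input, leakage and firing, with $\lambda(0,s,k)=0$ removing the firing loss at $v=0$; your remarks on measurability in time and on absolute summability of $\sum_x p_i(x)\mathcal A f(x)$ only make explicit what the paper leaves implicit.
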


\begin{proof}
Fix \((v,s,k)\in E\) and define the bounded indicator test function
\[
f_{v,s,k}(u,r,n)
:=
\mathbf 1_{\{(u,r,n)=(v,s,k)\}}.
\]
The weak nonlinear equation gives
\begin{equation}
\label{eq:indicator-weak-forward}
p_i(t;v,s,k)
=
p_i(0;v,s,k)
+
\int_0^t
\int_E
\mathcal A_{i,\boldsymbol\mu(\tau)}f_{v,s,k}(x)
\,\mu_i(\tau)(dx)\,d\tau.
\end{equation}
The integrand is bounded in absolute value by \(2(\gamma+3)\). Hence the
right-hand side is absolutely continuous, and differentiation is legitimate for
almost every \(t\).

We identify all transitions entering and leaving the state \((v,s,k)\).

\medskip
\noindent
\textbf{Case \(v\geq1\).}

An incoming mean-field spike changes only the potential according to
$
(u,s,k)\longmapsto(u+1,s,k).
$
Therefore the only transition entering \((v,s,k)\) is
$
(v-1,s,k)\longmapsto(v,s,k),
$
which occurs at rate \(a_i(t)\). This gives the gain term
$a_i(t)p_i(t;v-1,s,k).$ 
There are three ways to leave \((v,s,k)\):
\begin{enumerate}
\item an incoming spike, at rate \(a_i(t)\), sends \(v\) to \(v+1\);
\item leakage, at rate \(\gamma\), sends \(v\) to zero because \(v>0\);
\item firing, at rate \(\lambda(v,s,k)\), sends the state to
\((0,s+\xi,k+1)\).
\end{enumerate}
Thus the total loss rate is
$
a_i(t)+\gamma+\lambda(v,s,k),
$
which proves \eqref{eq:forward-positive}.

\medskip
\noindent
\textbf{Case \(v=0\).}

An incoming spike leaves the state \((0,s,k)\) at rate \(a_i(t)\), producing the
loss term
$
-a_i(t)p_i(t;0,s,k).
$
There is no leakage loss because leakage acts only when the potential is
positive. Also, a firing event from \((0,s,k)\) cannot occur in the present
hard-threshold model, because
$
\lambda(0,s,k)
=
\mathbf 1_{\{0>\alpha s^+\}}
=0.
$

Every state \((u,s,k)\), with \(u\geq1\), reaches \((0,s,k)\) through leakage at
rate \(\gamma\). Summing over \(u\geq1\) yields
$
\gamma\sum_{u\geq1}p_i(t;u,s,k).
$

We next determine the firing transitions entering \((0,s,k)\). If the
reinforcement increment equals \(+1\), then the predecessor reinforcement state
must be \((s-1,k-1)\). Therefore the corresponding gain is
\[
q^+(s-1,k-1)
\sum_{u\geq0}
\lambda(u,s-1,k-1)
 p_i(t;u,s-1,k-1).
\]
If the increment equals \(-1\), then the predecessor must be
\((s+1,k-1)\), which gives
\[
q^-(s+1,k-1)
\sum_{u\geq0}
\lambda(u,s+1,k-1)
 p_i(t;u,s+1,k-1).
\]
Adding the loss and gain terms proves \eqref{eq:forward-zero}.

Finally, \eqref{eq:forward-self-consistency} is exactly the definition
\[
\beta_i(t)=\int_E\lambda(x)\,\mu_i(t)(dx).
\]
Substituting
$
\lambda(v,s,k)=\mathbf 1_{\{v>\alpha s^+\}}
$
gives \eqref{eq:forward-self-consistency-threshold}.
\end{proof}

\begin{remark}
Equations \eqref{eq:forward-positive}--\eqref{eq:forward-zero} conserve total
mass. Indeed, summing them over \((v,s,k)\in E\), all incoming-spike, leakage,
and firing terms cancel pairwise, and one obtains
\[
\frac{d}{dt}\sum_{(v,s,k)\in E}p_i(t;v,s,k)=0.
\]
Hence a probability initial condition remains a probability distribution.
\end{remark}

\subsection{Exact moment equations}

The coordinate functions \(v\), \(s\), and \(k\) are unbounded. We therefore
first justify their use through bounded truncations.

\begin{proposition}[Finite first moments and exact moment identities]
\label{prop:moment-identities}
Assume
\[
\mathbb E[V_i(0)+K_i(0)+|S_i(0)|]<\infty,
\qquad i=1,\ldots,N.
\]
Then for every \(T>0\),
\[
\sup_{0\leq t\leq T}
\mathbb E[V_i(t)+K_i(t)+|S_i(t)|]<\infty.
\]
Moreover, for almost every \(t\geq0\),
\begin{align}
\frac{d}{dt}\mathbb E[V_i(t)]
=
\beta_{i-1}(t)+\beta_{i+1}(t)
-
\gamma\mathbb E[V_i(t)]
\label{eq:moment-V}
-
\mathbb E\bigl[V_i(t)\lambda_i(t)\bigr],
\end{align}
\begin{equation}
\label{eq:moment-K}
\frac{d}{dt}\mathbb E[K_i(t)]
=
\beta_i(t),
\end{equation}
and
\begin{equation}
\label{eq:moment-S}
\frac{d}{dt}\mathbb E[S_i(t)]
=
(2p-1)
\mathbb E\left[
\frac{S_i(t)}{K_i(t)}\lambda_i(t)
\right].
\end{equation}
Here
\[
\lambda_i(t)
=
\lambda(V_i(t),S_i(t),K_i(t)).
\]
\end{proposition}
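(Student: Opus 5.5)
Our approach is to apply the weak nonlinear equation to bounded truncations of the coordinate functions and then let the truncation level go to infinity.

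\textbf{Step 1: a priori bounds.} Fix \(R\in\mathbb N\) and set \(f_R(v,s,k)=\min\{v,R\}\), \(g_R(v,s,k)=\min\{k,R\}\) and \(h_R(v,s,k)=\max\{-R,\min\{s,R\}\}\). These are bounded test functions, so the weak equation
\[
\int f\,d\mu_i(t)=\int f\,d\mu_i(0)+\int_0^t\int \mathcal A_{i,\boldsymbol\mu(\tau)}f\,d\mu_i(\tau)\,d\tau
\]
applies to each of them. We bound the three generator terms in \eqref{eq:rmf-generator} pointwise:
\begin{itemize}
\item the input term is at most \(a_i(\tau)\leq 2\), since \(\beta_j\leq1\) for the hard-threshold rate;
\item leakage only decreases \(\min\{v,R\}\);
\item firing only decreases \(\min\{v,R\}\), and it changes \(k\) and \(s\) by at most one at rate at most \(1\).
\end{itemize}
Hence
\[
\mathbb E[\min\{V_i(t),R\}]\leq \mathbb E[V_i(0)]+2t,
\qquad
\mathbb E[\min\{K_i(t),R\}]\leq \mathbb E[K_i(0)]+t .
\]
For \(|S|\) we use \(|h_R|\) directly, or simply \(|S_i(t)|\leq K_i(t)\), which holds by the standing assumption \(|s|\leq k\). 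Letting \(R\to\infty\) by monotone convergence gives
\[
\sup_{t\leq T}\mathbb E[V_i(t)+K_i(t)+|S_i(t)|]\leq \mathbb E[V_i(0)+K_i(0)+|S_i(0)|]+4T<\infty .
\]
This is Proposition~\ref{prop:moment-identities}'s first claim. Alternatively, one can argue pathwise: \(V_i(t)\leq V_i(0)+A_i(t)\), where \(A_i\) is the input Poisson process with intensity at most \(2\), and \(K_i(t)\leq K_i(0)+\) the number of firings, which is dominated by a rate-one Poisson process, as in Theorem~\ref{thm:finite-nonexplosion}.

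\textbf{Step 2: exact identities.} We compute \(\mathcal A_{i,\boldsymbol\mu}\) on the truncations exactly.
\begin{itemize}
\item For \(f_R\), the input term equals \(a_i\mathbf 1_{\{v<R\}}\), leakage gives \(-\gamma\mathbf 1_{\{v>0\}}\min\{v,R\}=-\gamma\min\{v,R\}\), and firing gives \(-\lambda\min\{v,R\}\).
\item For \(g_R\), using \(\sum_\xi q^\xi=1\), the firing term is \(\lambda\mathbf 1_{\{k<R\}}\).
\item For \(h_R\), on \(\{|s|<R\}\) the firing term is \(\lambda(q^+-q^-)=\lambda(2p-1)s/k\), and it is bounded by \(\lambda\) everywhere.
\end{itemize}
All these integrands are dominated by \(2+(\gamma+1)V\) or by a constant. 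By Step 1 this dominating function is integrable uniformly on \([0,T]\), so dominated convergence lets us pass \(R\to\infty\) in the integrated identities, both inside the \(\mu_i(\tau)\)-integrals and in the time integral. Since \(\mathbf 1_{\{v<R\}},\mathbf 1_{\{k<R\}},\mathbf 1_{\{|s|<R\}}\to1\), we obtain
\[
\mathbb E[V_i(t)]=\mathbb E[V_i(0)]+\int_0^t\Bigl(a_i(\tau)-\gamma\mathbb E[V_i(\tau)]-\mathbb E[V_i(\tau)\lambda_i(\tau)]\Bigr)d\tau,
\]
together with the analogous integrated identities for \(K\) and \(S\). The integrands are bounded on \([0,T]\), so the left-hand sides are absolutely continuous, and Lebesgue differentiation yields \eqref{eq:moment-V}--\eqref{eq:moment-S} for almost every \(t\).

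The main obstacle is justifying the limit in the \(V\)-identity. This requires the uniform-in-time integrability of \(V_i\) from Step 1, since the leakage and firing terms grow linearly in \(v\). The \(S\) and \(K\) terms are harmless because \(|s|/k\leq1\) and \(\lambda\leq1\).
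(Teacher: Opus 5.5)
Your proof is correct, and Step~2 coincides with the paper's argument. It uses the same truncations $v\wedge R$, $k\wedge R$, $(-R)\vee(s\wedge R)$, the same generator computations, and dominated convergence in the weak equation. You are in fact more explicit than the paper about the dominating function $2+(\gamma+1)v$ and about the identity on $\{|s|<R\}$ for integer $R$.

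The genuine difference is in the a priori first-moment bound. The paper argues pathwise: the total event rate of a representative neuron is at most $\gamma+3$, so the number $J_i(T)$ of events is dominated by a Poisson variable. It then uses $V_i(t)\le V_i(0)+J_i(t)$, $K_i(t)\le K_i(0)+J_i(t)$ and $|S_i(t)|\le|S_i(0)|+J_i(t)$.

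You instead stay at the level of the weak forward equation and use the sign structure of the generator on the truncations. Leakage and firing can only lower $v\wedge R$, and inputs raise it at rate at most $2$; monotone convergence then finishes the bound. Your route needs nothing beyond the defining weak equation for $\mu_i$, with no realization of the process and no Poisson domination. It also gives sharper, mechanism-specific constants: $2t$ for $V$ and $t$ for $K$ and $|S|$, instead of $(\gamma+3)t$ for each. The paper's route is shorter and yields almost-sure bounds with a single dominating variable $V_i(0)+J_i(T)$, uniform over $t\in[0,T]$. It does, however, rely on the process realization from Theorem~\ref{thm:existence-uniqueness-rmf}.

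A minor point: if you bound $|S_i(t)|$ via $|S_i(t)|\le K_i(t)$ rather than via $|h_R|$, your final constant involves $\mathbb E[K_i(0)]$ in place of $\mathbb E|S_i(0)|$. It is finite either way.
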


\begin{proof}
Because the total event rate affecting a representative neuron is at most
\(\gamma+3\), the number \(J_i(T)\) of events affecting neuron \(i\) before time
\(T\) is stochastically dominated by a Poisson random variable with parameter
\((\gamma+3)T\). Along every sample path,
\[
V_i(t)\leq V_i(0)+J_i(t),
\]
because only incoming spikes increase \(V_i\), each by one, while leakage and
firing reset it to zero. Similarly,
\[
K_i(t)\leq K_i(0)+J_i(t),
\]
and, since every reinforcement update changes \(S_i\) by exactly one,
\[
|S_i(t)|\leq |S_i(0)|+J_i(t).
\]
Taking expectations proves the uniform finite-time first-moment bound.

For \(R\geq1\), let
\[
\chi_R(x):=(-R)\vee(x\wedge R)
\]
and define the bounded test functions
\[
f_R^V(v,s,k)=v\wedge R,
\qquad
f_R^K(v,s,k)=k\wedge R,
\qquad
f_R^S(v,s,k)=\chi_R(s).
\]
The weak equation applies to all three functions. The jump increments of these
truncations are bounded by the corresponding increments of \(v\), \(k\), and
\(|s|\). The finite-time moment estimate therefore provides an integrable
dominating random variable. Consequently, dominated convergence permits
\(R\to\infty\) in the weak equations.

For \(f(v,s,k)=v\), the three parts of the generator are
\[
\gamma\mathbf 1_{\{v>0\}}(0-v)=-\gamma v,
\]
\[
\lambda(v,s,k)\sum_{\xi}q^\xi(s,k)(0-v)
=-v\lambda(v,s,k),
\]
and
\[
a_i(t)((v+1)-v)=a_i(t).
\]
Thus
\[
\mathcal A_{i,\boldsymbol\mu(t)}v
=
-\gamma v-v\lambda(v,s,k)+a_i(t),
\]
which yields \eqref{eq:moment-V} after integration.

For \(f(v,s,k)=k\), leakage and incoming spikes do not change \(k\), while a
firing increases it by one. Hence
\[
\mathcal A_{i,\boldsymbol\mu(t)}k
=
\lambda(v,s,k),
\]
and \eqref{eq:moment-K} follows.

For \(f(v,s,k)=s\), only firing contributes. Since
\[
\sum_{\xi\in\{-1,+1\}}q^\xi(s,k)\xi
=q^+(s,k)-q^-(s,k)
=(2p-1)\frac{s}{k},
\]
we have
\[
\mathcal A_{i,\boldsymbol\mu(t)}s
=
(2p-1)\lambda(v,s,k)\frac{s}{k}.
\]
The ratio is uniformly bounded by one in absolute value because \(|s|\leq k\).
This proves \eqref{eq:moment-S}.
\end{proof}

For the hard-threshold rate, \eqref{eq:moment-V} becomes
\[
\boxed{
\frac{d}{dt}\mathbb E[V_i(t)]
=
\beta_{i-1}(t)+\beta_{i+1}(t)
-
\gamma\mathbb E[V_i(t)]
-
\mathbb E\left[
V_i(t)\mathbf 1_{\{V_i(t)>\alpha S_i^+(t)\}}
\right].
}
\]
The equations are exact but are not closed, because they involve mixed moments
of \(V_i\), \(S_i\), and \(K_i\).

\subsection{Spatially homogeneous RMF}

The ring structure is invariant under cyclic permutations. Let
\[
\vartheta(i)=i+1\pmod N
\]
be the one-step cyclic shift.

\begin{proposition}
\label{prop:spatial-homogeneity}
Assume that the initial laws are translation invariant:
\[
\mu_{i,0}=\mu_0,
\qquad i=1,\ldots,N.
\]
Then the unique nonlinear RMF solution satisfies
\[
\mu_i(t)=\mu(t),
\qquad
\beta_i(t)=\beta(t),
\qquad i=1,\ldots,N,
\]
for every \(t\geq0\). Consequently, every representative neuron receives total
input intensity
\[
a(t)=2\beta(t).
\]
\end{proposition}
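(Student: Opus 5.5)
The plan is a symmetry argument combined with the uniqueness in law of the nonlinear RMF process. The statement speaks of ``the unique nonlinear RMF solution'', and the introduction announces global existence and uniqueness via a fixed-point argument, so I take that well-posedness result as available for any family of initial laws with the stated integrability. The aim is to show that the cyclic shift \(\vartheta\) maps solutions to solutions. Uniqueness then forces the solution to be shift invariant.

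\textbf{Step 1: the shifted family is a solution.} Let \(\boldsymbol\mu(t)=(\mu_1(t),\ldots,\mu_N(t))\) be the solution with \(\mu_i(0)=\mu_0\) for all \(i\). Define the shifted family by \(\widetilde\mu_i(t):=\mu_{\vartheta(i)}(t)=\mu_{i+1}(t)\), with indices taken mod \(N\). Since \(\beta_i(\mu_i)=\int\lambda\,d\mu_i\) and \(\lambda\) does not depend on \(i\), we get \(\beta_i(\widetilde\mu_i)=\beta_{i+1}(\mu_{i+1})\). The neighbourhood structure satisfies \(\mathcal N_{\vartheta(i)}=\vartheta(\mathcal N_i)\). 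Hence
\[
\beta_{i-1}(\widetilde\mu_{i-1})+\beta_{i+1}(\widetilde\mu_{i+1})=\beta_{i}(\mu_{i})+\beta_{i+2}(\mu_{i+2}),
\]
which is exactly the input intensity \(a_{i+1}(t)\) of type \(i+1\) in the original system. The leakage and firing parts of \eqref{eq:rmf-generator} and the probabilities \(q^\pm\) do not depend on the type index. It follows that
\[
\mathcal A_{i,\widetilde{\boldsymbol\mu}(t)}f=\mathcal A_{i+1,\boldsymbol\mu(t)}f
\]
for every bounded \(f\). Therefore
\[
\frac{d}{dt}\int f\,d\widetilde\mu_i(t)=\int\mathcal A_{i+1,\boldsymbol\mu(t)}f\,d\mu_{i+1}(t)=\int\mathcal A_{i,\widetilde{\boldsymbol\mu}(t)}f\,d\widetilde\mu_i(t),
\]
so \(\widetilde{\boldsymbol\mu}\) solves the nonlinear RMF system. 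Its initial data are \(\widetilde\mu_i(0)=\mu_0\), the same as for \(\boldsymbol\mu\). If the solution is formulated at the level of processes rather than flows of marginals, the same verification works pathwise: relabel the representative processes as \(\widetilde X_i:=X_{i+1}\), with their Poisson inputs relabeled accordingly.

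\textbf{Step 2: uniqueness gives invariance, and homogeneity follows.} By uniqueness, \(\widetilde{\boldsymbol\mu}(t)=\boldsymbol\mu(t)\) for all \(t\), that is, \(\mu_{i+1}(t)=\mu_i(t)\) for all \(i\). Iterating this over \(i=1,\ldots,N\), which is transitive on the ring, gives \(\mu_i(t)=\mu_1(t)=:\mu(t)\). Then \(\beta_i(t)=\int\lambda\,d\mu(t)=:\beta(t)\) for every \(i\), and \(a_i(t)=\beta_{i-1}(t)+\beta_{i+1}(t)=2\beta(t)\).

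\textbf{Main obstacle.} The only delicate point is making sure the uniqueness class used in the fixed-point theorem contains the shifted solution. I expect this to be easy: the solution class should be defined by conditions invariant under permutation of types, such as measurable \(\beta\in[0,1]\), laws on \(E\), and finite first moments as in Proposition \ref{prop:moment-identities}. If instead the fixed point is built on the map \(\boldsymbol\beta\mapsto\) the laws of the linear processes driven by \(\boldsymbol\beta\), I would argue directly at that level. That map commutes with the shift, so it maps the closed subset of shift-invariant \(\boldsymbol\beta\) into itself. The contraction then has its unique fixed point in that subset, which gives the claim without a separate uniqueness step.
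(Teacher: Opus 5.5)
Your proof is correct and follows the paper's argument: you show that the cyclically shifted family $\widetilde\mu_i=\mu_{\vartheta(i)}$ solves the same nonlinear weak equations with the same initial data, then invoke uniqueness from Theorem~\ref{thm:existence-uniqueness-rmf} to get $\mu_{i+1}=\mu_i$, a common law, and hence $a(t)=2\beta(t)$. Your concern about the uniqueness class does not arise, since that theorem gives uniqueness for arbitrary initial laws in $\mathcal P(E)^N$ among families satisfying the weak equation for bounded $f$, and this class is invariant under relabelling of types.
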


\begin{proof}
Let
\[
\boldsymbol\mu(t)
=(\mu_1(t),\ldots,\mu_N(t))
\]
be the unique solution furnished by
Theorem~\ref{thm:existence-uniqueness-rmf}. Define the shifted family
\[
\widetilde\mu_i(t):=\mu_{\vartheta(i)}(t).
\]
Its firing rates satisfy
\[
\widetilde\beta_i(t)
=
\int_E\lambda(x)\,\widetilde\mu_i(t)(dx)
=
\beta_{\vartheta(i)}(t).
\]
Because the network is a ring, the neighbours of \(\vartheta(i)\) are precisely
\(\vartheta(i-1)\) and \(\vartheta(i+1)\). Hence
\begin{align*}
\widetilde\beta_{i-1}(t)+\widetilde\beta_{i+1}(t)
=
\beta_{\vartheta(i-1)}(t)+\beta_{\vartheta(i+1)}(t)
=
\beta_{\vartheta(i)-1}(t)+\beta_{\vartheta(i)+1}(t).
\end{align*}
Therefore the shifted family satisfies exactly the same nonlinear weak equations
as the original family, with shifted indices.

At time zero,
\[
\widetilde\mu_i(0)
=
\mu_{\vartheta(i),0}
=
\mu_0
=
\mu_{i,0}.
\]
Thus \(\boldsymbol\mu\) and \(\widetilde{\boldsymbol\mu}\) solve the same nonlinear
problem with the same initial law. Uniqueness implies
\[
\widetilde\mu_i(t)=\mu_i(t),
\]
so
\[
\mu_{i+1}(t)=\mu_i(t)
\]
for every \(i\). Iterating around the ring gives a common law \(\mu(t)\) for all
types. Integrating \(\lambda\) against this common law gives a common firing rate
\(\beta(t)\), and therefore
\[
a_i(t)=\beta_{i-1}(t)+\beta_{i+1}(t)=2\beta(t).
\]
\end{proof}

Under the hypotheses of Proposition~\ref{prop:spatial-homogeneity}, the nonlinear
generator reduces to
\begin{align}
\mathcal A_{\mu(t)}f(v,s,k)
={}&
\gamma\mathbf 1_{\{v>0\}}
\bigl[f(0,s,k)-f(v,s,k)\bigr]+
2\beta(t)
\bigl[f(v+1,s,k)-f(v,s,k)\bigr]
\label{eq:homogeneous-generator}
\\
&+\nonumber
\lambda(v,s,k)
\sum_{\xi\in\{-1,+1\}}
q^\xi(s,k)
\bigl[f(0,s+\xi,k+1)-f(v,s,k)\bigr],
\end{align}
where
\[
\boxed{
\beta(t)
=
\int_E\lambda(x)\,\mu(t)(dx)
=
\mathbb P\bigl(V(t)>\alpha S^+(t)\bigr).
}
\]
The homogeneous forward equations are obtained from
Proposition~\ref{prop:forward-equations} by replacing \(a_i(t)\) with
\(2\beta(t)\). In particular, for \(v\geq1\),
\begin{align*}
\frac{d}{dt}p(t;v,s,k)
=
2\beta(t)p(t;v-1,s,k)
-
\bigl[2\beta(t)+\gamma+\lambda(v,s,k)\bigr]
p(t;v,s,k),
\end{align*}
and the equation at \(v=0\) is
\begin{align*}
\frac{d}{dt}p(t;0,s,k)
={}&
-2\beta(t)p(t;0,s,k)
+
\gamma\sum_{u\geq1}p(t;u,s,k)
\\
&+
q^+(s-1,k-1)
\sum_{u\geq0}\lambda(u,s-1,k-1)p(t;u,s-1,k-1)
\\
&+
q^-(s+1,k-1)
\sum_{u\geq0}\lambda(u,s+1,k-1)p(t;u,s+1,k-1).
\end{align*}
If the initial first moments are finite, the homogeneous mean-potential equation
is
\[
\boxed{
\frac{d}{dt}\mathbb E[V(t)]
=
2\beta(t)
-
\gamma\mathbb E[V(t)]
-
\mathbb E\left[
V(t)\mathbf 1_{\{V(t)>\alpha S^+(t)\}}
\right].
}
\]

\subsection{Non-explosion}

\begin{proposition}
Assume
$
\lambda(v,s,k)
=
\mathbf 1_{\{v>\alpha s^{+}\}}.
$
For every deterministic locally integrable family
$
(\beta_i(t))_{1\leq i\leq N}
$
satisfying
\[
0\leq\beta_i(t)\leq1,
\]
the RMF limiting process is non-explosive.
\end{proposition}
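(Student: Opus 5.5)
The plan is to mirror the proof of Theorem~\ref{thm:finite-nonexplosion}, now for the single representative neuron of type \(i\) with the deterministic inputs \(\beta_{i\pm1}(t)\) frozen. Once \((\beta_i)\) is fixed, the process \(X_i(t)=(V_i,S_i,K_i)\) is an ordinary time-inhomogeneous Markov jump process on \(E\). Its generator is \eqref{eq:rmf-generator} with \(\beta_{i\pm1}(\mu_{i\pm1})\) replaced by the given functions \(\beta_{i\pm1}(t)\). Non-explosion then reduces to showing that only finitely many jumps occur on each bounded interval \([0,T]\).

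First I bound the total jump intensity out of a state \(x=(v,s,k)\) at time \(t\):
\[
\Lambda_i(t,x)=\gamma\mathbf 1_{\{v>0\}}+\mathbf 1_{\{v>\alpha s^+\}}+\beta_{i-1}(t)+\beta_{i+1}(t)\le \gamma+1+2=\gamma+3 .
\]
This bound is uniform in \(x\) and \(t\), because \(0\le\beta_j(t)\le 1\). The firing transition splits according to \(\xi\), but \(q^++q^-=1\), so the split does not change the total firing rate. The only technical point here is that \(q^\pm(s,k)\) must be genuine probabilities along the whole trajectory. I will check that the constraint \(|s|\le k\) is preserved: a firing sends \((s,k)\) to \((s\pm1,k+1)\), so \(|s+\xi|\le |s|+1\le k+1\). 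Leakage and input do not touch \((s,k)\). Hence the state space \(E\) is invariant.

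Next I construct the process by thinning. Take a homogeneous Poisson process \(\Pi\) of rate \(\gamma+3\), together with independent uniform marks. At each atom of \(\Pi\) at time \(\tau\), accept a leakage, a firing (with the \(\xi\) drawn from \(q^\pm\)), an input from \(i-1\), an input from \(i+1\), or nothing. The acceptance probabilities are proportional to the respective rates evaluated at the pre-jump state and at time \(\tau\). Local integrability and measurability of \(\beta_j\) make the input rates legitimate time-dependent intensities. The constructed process has the prescribed generator, and its number of jumps \(J_i(t)\) satisfies \(J_i(t)\le \Pi([0,t])\) pathwise. Consequently
\[
\mathbb E[J_i(t)]\le (\gamma+3)t<\infty ,
\]
so \(J_i(t)<\infty\) almost surely for every \(t\). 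Since the jump times cannot accumulate, the process is defined on all of \([0,\infty)\). Doing this independently for each \(i=1,\ldots,N\) gives non-explosion of the full family.

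The step that needs the most care is the thinning construction with time-dependent rates. One must justify that the uniformisation at constant rate \(\gamma+3\) produces the same law as the minimal process associated with the inhomogeneous generator. I would argue via the standard fact that the minimal process is uniquely determined in law by its rates up to explosion. The dominated construction coincides with it up to the explosion time, and that explosion time is infinite because \(\Pi\) has no accumulation points. As a byproduct, this also gives the pathwise bounds \(V_i(t)\le V_i(0)+J_i(t)\), and likewise for \(K_i\) and \(|S_i|\), which are used in Proposition~\ref{prop:moment-identities}.
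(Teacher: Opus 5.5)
Your proof is correct and follows the paper's argument: the uniform bound $\Lambda_i(t,x)\le\gamma+3$ on the total jump rate, followed by domination of the jump count on $[0,T]$ by a Poisson variable with parameter $(\gamma+3)T$. Your uniformisation construction and your check that $|s|\le k$ is preserved add detail the paper leaves implicit: they make the stochastic domination explicit and confirm that $q^\pm$ remain probabilities along the trajectory.
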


\begin{proof}
For a representative neuron of type \(i\), the total event rate at time \(t\)
is bounded by
\begin{align*}
\Lambda_i(t)
&=
\gamma\mathbf 1_{\{V_i(t)>0\}}
+
\lambda(V_i(t),S_i(t),K_i(t))
+
\beta_{i-1}(t)
+
\beta_{i+1}(t)
\\
&\leq
\gamma+1+1+1
=
\gamma+3.
\end{align*}
Therefore, the number of events affecting neuron \(i\) up to time \(T\) is
stochastically dominated by a Poisson random variable with parameter
$
(\gamma+3)T.
$
For the vector of \(N\) representative neuron types, the total rate is bounded
by
$
N(\gamma+3).
$
Consequently, only finitely many events occur on every finite time interval,
almost surely.
\end{proof}

\subsection{Existence and uniqueness}

We now establish that the nonlinear process introduced above is well defined.
Throughout this section, we assume that \(N\geq 3\) and that the neuron types
are arranged on the discrete ring
$
I=\{1,\ldots,N\}.
$
All indices are understood modulo \(N\).
Let
\[
E
=
\left\{
(v,s,k)\in
\mathbb N\times\mathbb Z\times\mathbb N^{*}:
|s|\leq k
\right\}.
\]
For \(x=(v,s,k)\in E\), define
\[
\lambda(x)
=
\lambda(v,s,k)
=
\mathbf 1_{\{v>\alpha s^{+}\}},
\qquad
s^{+}=\max\{s,0\}.
\]
The reinforcement probabilities are
\[
q^{+}(s,k)
=
\frac12+
\frac{2p-1}{2}\frac{s}{k},
\]
and
\[
q^{-}(s,k)
=
\frac12-
\frac{2p-1}{2}\frac{s}{k}.
\]
Since \(|s|\leq k\), both quantities belong to \([0,1]\).

For a family of probability measures
\[
\boldsymbol{\mu}(t)
=
\bigl(\mu_1(t),\ldots,\mu_N(t)\bigr),
\]
define
\[
\beta_i(t)
=
\int_E \lambda(x)\,\mu_i(t)(dx).
\]
Since \(0\leq\lambda\leq1\), we have
\[
0\leq\beta_i(t)\leq1.
\]

The total mean-field input rate to a neuron of type \(i\) is
\[
a_i(t)
=
\beta_{i-1}(t)+\beta_{i+1}(t).
\]

For a bounded function \(f:E\to\mathbb R\), define
\begin{align}
\mathcal A_{i,\boldsymbol{\mu}(t)}f(v,s,k)
={}&
\gamma\mathbf 1_{\{v>0\}}
\bigl[
f(0,s,k)-f(v,s,k)
\bigr]+
a_i(t)
\bigl[
f(v+1,s,k)-f(v,s,k)
\bigr]
\label{eq:nonlinear-generator-existence}
\\
&+
\lambda(v,s,k)
\sum_{\xi\in\{-1,+1\}}
q^\xi(s,k)
\bigl[
f(0,s+\xi,k+1)-f(v,s,k)
\bigr].
\nonumber
\end{align}

\begin{theorem}
\label{thm:existence-uniqueness-rmf}
Let
$
\boldsymbol{\mu}_0
=
(\mu_{1,0},\ldots,\mu_{N,0})
\in\mathcal P(E)^N
$
be an arbitrary family of initial probability measures.

Then there exists a unique family of laws
$
\boldsymbol{\mu}(t)
=
(\mu_1(t),\ldots,\mu_N(t))$, 
$t\geq0$,
such that, for every bounded function \(f:E\to\mathbb R\),
\[
\int_E f(x)\,\mu_i(t)(dx)
=
\int_E f(x)\,\mu_{i,0}(dx)
+
\int_0^t
\int_E
\mathcal A_{i,\boldsymbol{\mu}(u)}f(x)
\,\mu_i(u)(dx)\,du.
\]

Equivalently, there exists a unique in law family of nonlinear jump processes
\[
X_i(t)
=
\bigl(V_i(t),S_i(t),K_i(t)\bigr),
\qquad i=1,\ldots,N,
\]
such that
\[
\mathcal L(X_i(t))=\mu_i(t)
\]
and
\[
\beta_i(t)
=
\mathbb E\left[
\mathbf 1_{\{
V_i(t)>\alpha S_i^{+}(t)
\}}
\right].
\]

Moreover, each process is non-explosive.
\end{theorem}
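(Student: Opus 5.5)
The plan is a fixed-point argument on the input rates. For a deterministic measurable family \(\mathbf b=(b_1,\ldots,b_N)\) with \(0\le b_i(t)\le 1\), consider the \emph{linear} time-inhomogeneous jump process on \(E\) with generator \eqref{eq:nonlinear-generator-existence} in which \(a_i(t)\) is replaced by \(b_{i-1}(t)+b_{i+1}(t)\). All rates are bounded by \(\gamma+3\), and the transitions preserve \(|s|\le k\). Hence this process can be built pathwise from independent Poisson clocks via thinning: a leak clock of rate \(\gamma\), a firing clock of rate \(1\), an input clock of rate \(2\) thinned by \((b_{i-1}(t)+b_{i+1}(t))/2\), and i.i.d.\ uniforms to select \(\xi\). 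By the non-explosion proposition above, it is well defined and non-explosive, with law \(\mu_i^{\mathbf b}(t)\). Because the rates are bounded, the linear forward (Kolmogorov) equation has a unique solution among probability-measure-valued paths, for instance by a Picard iteration in total variation. So the weak equation with frozen \(\mathbf b\) characterizes \(\mu_i^{\mathbf b}\). Define
\[
\Phi(\mathbf b)_i(t)=\int_E\lambda\,d\mu_i^{\mathbf b}(t)\in[0,1].
\]
A solution of the nonlinear problem is exactly a family \(\mu^{\mathbf b}\) with \(\Phi(\mathbf b)=\mathbf b\). Conversely, any solution \(\boldsymbol\mu\) of the nonlinear weak equation has \(\beta\in[0,1]\) and solves the linear equation with \(\mathbf b=\beta\), so it equals \(\mu^{\beta}\). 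Uniqueness therefore reduces to uniqueness of fixed points of \(\Phi\).

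The key estimate is Lipschitz continuity of \(\mathbf b\mapsto\mu^{\mathbf b}(t)\) in total variation. Couple the processes driven by \(\mathbf b\) and \(\widehat{\mathbf b}\) on the same Poisson clocks, so that all leak and firing clocks and all \(\xi\)-selections are shared. The input clock is thinned using a single uniform mark. The two paths then coincide until the first input mark falls between the two thresholds, which happens at rate at most
\[
|b_{i-1}-\widehat b_{i-1}|(t)+|b_{i+1}-\widehat b_{i+1}|(t).
\]
Hence
\[
\|\mu_i^{\mathbf b}(t)-\mu_i^{\widehat{\mathbf b}}(t)\|_{TV}\le \mathbb P(\text{decoupled before }t)\le \int_0^t\sum_{j\in\mathcal N_i}|b_j(u)-\widehat b_j(u)|\,du .
\]
Since \(0\le\lambda\le1\), this gives
\[
|\Phi(\mathbf b)_i(t)-\Phi(\widehat{\mathbf b})_i(t)|\le 2\int_0^t\|\mathbf b(u)-\widehat{\mathbf b}(u)\|_\infty\,du .
\]

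A Volterra-type bound of this form makes \(\Phi\) a contraction on \(L^\infty([0,T];[0,1]^N)\) with the weighted norm \(\sup_t e^{-4t}\|\mathbf b(t)\|_\infty\), for any \(T\). Alternatively, one can take short intervals of length \(<1/2\) and concatenate, using that the constants do not depend on the initial law. Banach's theorem then gives a unique fixed point on each \([0,T]\), and these fixed points are consistent across \(T\). This yields global existence and uniqueness of \(\boldsymbol\mu\). The process \(X_i\) built with \(\mathbf b=\beta\) realises the laws, and uniqueness in law follows from uniqueness of \(\beta\) together with uniqueness for the linear martingale problem with bounded rates. Non-explosion follows from the earlier proposition, since \(\beta_i\in[0,1]\).

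I expect the main obstacle to be the coupling and TV estimate: measurability of \(\Phi(\mathbf b)\) in \(t\), and making the decoupling-time bound rigorous. The bound would be derived from the thinning construction, where the decoupling time is the first point of a Poisson process of intensity \(\sum_j|b_j-\widehat b_j|\). Identifying weak solutions with the pathwise process (uniqueness for the linear forward equation) is routine given bounded rates.
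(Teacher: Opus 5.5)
Your proposal is correct and follows essentially the same route as the paper. In both, the fixed point is for the self-consistency map $\Phi$ on input intensities, with the bound $|\Phi(\mathbf b)_i(t)-\Phi(\widehat{\mathbf b})_i(t)|\le 2\int_0^t\|\mathbf b(u)-\widehat{\mathbf b}(u)\|_\infty\,du$ obtained by coupling the two frozen-input processes on shared leak, firing and reinforcement randomness and bounding the probability of an unmatched input.

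Your weighted norm $\sup_t e^{-4t}\|\mathbf b(t)\|_\infty$ is an equivalent alternative to the paper's concatenation over intervals of length $h<1/2$. Your appeal to uniqueness for the linear forward equation, showing that any weak solution equals $\mu^{\beta}$ for its own $\beta$, makes explicit a step the paper leaves implicit: passing from uniqueness of the fixed point of $\Phi$ to uniqueness among all solutions of the weak equation.
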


\begin{proof}
The proof is divided into four steps.

\medskip

\noindent
\textbf{Step 1: Construction for prescribed input intensities.}

Fix \(T>0\), and let
$
\boldsymbol{\beta}
=
(\beta_1,\ldots,\beta_N)
$
be a family of measurable functions on \([0,T]\) satisfying
$
0\leq\beta_i(t)\leq1.
$
Define
\[
a_i^{\boldsymbol{\beta}}(t)
=
\beta_{i-1}(t)+\beta_{i+1}(t).
\]
Then
\[
0\leq a_i^{\boldsymbol{\beta}}(t)\leq2.
\]

For every neuron type \(i\), consider a time-inhomogeneous Markov jump process
\[
X_i^{\boldsymbol{\beta}}(t)
=
\left(
V_i^{\boldsymbol{\beta}}(t),
S_i^{\boldsymbol{\beta}}(t),
K_i^{\boldsymbol{\beta}}(t)
\right)
\]
with initial law \(\mu_{i,0}\) and the following transitions:
$
(v,s,k)\longmapsto(0,s,k)
$
at rate
$
\gamma\mathbf 1_{\{v>0\}},
$
$
(v,s,k)\longmapsto(0,s+\xi,k+1)
$
at rate
$
\lambda(v,s,k)q^\xi(s,k)$,
$\xi\in\{-1,+1\},
$ 
and
$
(v,s,k)\longmapsto(v+1,s,k)
$
at rate
$
a_i^{\boldsymbol{\beta}}(t)$.

The total transition rate is bounded by
\begin{align*}
\Lambda_i^{\boldsymbol{\beta}}(t,v,s,k)
&=
\gamma\mathbf 1_{\{v>0\}}
+
\lambda(v,s,k)
+
a_i^{\boldsymbol{\beta}}(t)
\leq
\gamma+1+2
=
\gamma+3.
\end{align*}
Therefore the number of jumps of
\(X_i^{\boldsymbol{\beta}}\) on every finite interval is stochastically
dominated by a Poisson random variable with finite parameter.

Consequently, for every prescribed
\(\boldsymbol{\beta}\), there exists a unique non-explosive
time-inhomogeneous Markov process
$
X_i^{\boldsymbol{\beta}}.
$ Let
\[
\mu_i^{\boldsymbol{\beta}}(t)
=
\mathcal L\left(X_i^{\boldsymbol{\beta}}(t)\right).
\]

\medskip

\noindent
\textbf{Step 2: Definition of the self-consistency map.}

Let
\[
\mathcal B_T
=
\left\{
\boldsymbol{\beta}:[0,T]\to[0,1]^N:
\boldsymbol{\beta}\text{ is measurable}
\right\}.
\]
Equip this space with the norm
$
\|\boldsymbol{\beta}\|_T
=
\max_{1\leq i\leq N}
\operatorname*{ess\,sup}_{0\leq t\leq T}
|\beta_i(t)|.
$

Define
$
\Phi:\mathcal B_T\longrightarrow\mathcal B_T
$
by
\[
\Phi_i(\boldsymbol{\beta})(t)
=
\mathbb E\left[
\lambda\left(
X_i^{\boldsymbol{\beta}}(t)
\right)
\right].
\]
More explicitly,
\[
\Phi_i(\boldsymbol{\beta})(t)
=
\mathbb P\left(
V_i^{\boldsymbol{\beta}}(t)
>
\alpha
\left(
S_i^{\boldsymbol{\beta}}(t)
\right)^{+}
\right).
\]
Since \(0\leq\lambda\leq1\), the map \(\Phi\) indeed takes values in
\(\mathcal B_T\).

A fixed point
$
\boldsymbol{\beta}=\Phi(\boldsymbol{\beta})
$
is exactly a self-consistent family of RMF firing intensities.

\medskip

\noindent
\textbf{Step 3: Contraction estimate on a short time interval.}

Let
$
\boldsymbol{\beta},
\widehat{\boldsymbol{\beta}}
\in\mathcal B_T.
$
We couple
$
X_i^{\boldsymbol{\beta}}
$ and $
X_i^{\widehat{\boldsymbol{\beta}}}
$
using the same initial condition and the same randomness for leakage, firing,
and reinforcement.

For the input processes, use the standard common-part decomposition. At time
\(t\), both processes receive a common input at rate
$
\min\left\{
a_i^{\boldsymbol{\beta}}(t),
a_i^{\widehat{\boldsymbol{\beta}}}(t)
\right\},
$
only \(X_i^{\boldsymbol{\beta}}\) receives an input at rate
$
\left(
a_i^{\boldsymbol{\beta}}(t)
-
a_i^{\widehat{\boldsymbol{\beta}}}(t)
\right)^{+},
$
and only \(X_i^{\widehat{\boldsymbol{\beta}}}\) receives an input at rate
$
\left(
a_i^{\widehat{\boldsymbol{\beta}}}(t)
-
a_i^{\boldsymbol{\beta}}(t)
\right)^{+}.
$

As long as no unmatched input has occurred, the two processes remain equal.
Indeed, they start from the same state and use the same leakage clocks, firing
clocks, and reinforcement marks.

Let
\[
\tau_i
=
\inf\left\{
t\geq0:
\text{an unmatched input occurs for neuron type }i
\right\}.
\]
Then
\[
X_i^{\boldsymbol{\beta}}(t)
=
X_i^{\widehat{\boldsymbol{\beta}}}(t)
\qquad
\text{on the event }\{\tau_i>t\}.
\]
Therefore
\begin{align*}
\left|
\Phi_i(\boldsymbol{\beta})(t)
-
\Phi_i(\widehat{\boldsymbol{\beta}})(t)
\right|
=
\left|
\mathbb E\left[
\lambda(X_i^{\boldsymbol{\beta}}(t))
-
\lambda(X_i^{\widehat{\boldsymbol{\beta}}}(t))
\right]
\right|
\leq
\mathbb E\left[
\left|
\lambda(X_i^{\boldsymbol{\beta}}(t))
-
\lambda(X_i^{\widehat{\boldsymbol{\beta}}}(t))
\right|
\right].
\end{align*}
Since \(0\leq\lambda\leq1\),
\[
\left|
\lambda(X_i^{\boldsymbol{\beta}}(t))
-
\lambda(X_i^{\widehat{\boldsymbol{\beta}}}(t))
\right|
\leq
\mathbf 1_{\{
X_i^{\boldsymbol{\beta}}(t)
\neq
X_i^{\widehat{\boldsymbol{\beta}}}(t)
\}}.
\]
Hence
\[
\left|
\Phi_i(\boldsymbol{\beta})(t)
-
\Phi_i(\widehat{\boldsymbol{\beta}})(t)
\right|
\leq
\mathbb P(\tau_i\leq t).
\]

The unmatched input process has instantaneous rate
$
\left|
a_i^{\boldsymbol{\beta}}(u)
-
a_i^{\widehat{\boldsymbol{\beta}}}(u)
\right|.
$
Thus
\begin{align*}
\mathbb P(\tau_i\leq t)
\leq
\int_0^t
\left|
a_i^{\boldsymbol{\beta}}(u)
-
a_i^{\widehat{\boldsymbol{\beta}}}(u)
\right|
du\leq
\int_0^t
\left(
|\beta_{i-1}(u)-\widehat\beta_{i-1}(u)|
+
|\beta_{i+1}(u)-\widehat\beta_{i+1}(u)|
\right)
du.
\end{align*}
Consequently,
\[
\left|
\Phi_i(\boldsymbol{\beta})(t)
-
\Phi_i(\widehat{\boldsymbol{\beta}})(t)
\right|
\leq
2t
\|\boldsymbol{\beta}
-
\widehat{\boldsymbol{\beta}}\|_T.
\]
Taking the supremum over \(t\in[0,T]\) and the maximum over \(i\), we obtain
\[
\boxed{
\|\Phi(\boldsymbol{\beta})
-
\Phi(\widehat{\boldsymbol{\beta}})\|_T
\leq
2T
\|\boldsymbol{\beta}
-
\widehat{\boldsymbol{\beta}}\|_T.
}
\]

If
$
T<\frac12,
$
then \(2T<1\), and \(\Phi\) is a contraction on \(\mathcal B_T\).
Since \(\mathcal B_T\), modulo equality almost everywhere, is a closed subset
of the Banach space
$
L^\infty([0,T];\mathbb R^N),
$
the Banach fixed-point theorem gives a unique fixed point
$
\boldsymbol{\beta}^{*}\in\mathcal B_T.
$
The processes
$
X_i^{\boldsymbol{\beta}^{*}}
$
are therefore the unique nonlinear RMF processes on \([0,T]\).

\medskip

\noindent
\textbf{Step 4: Extension to arbitrary times.}

Choose \(h\in(0,1/2)\). The previous argument gives a unique solution on
$
[0,h].
$
The laws at time \(h\),
\[
\mu_i(h)
=
\mathcal L(X_i(h)),
\]
can then be used as initial laws for the same fixed-point problem on
$
[h,2h].
$
The contraction estimate is unchanged because it depends only on the length
of the interval and not on the initial distributions.

Repeating the construction gives a unique solution on
$
[0,nh]
$
for every integer \(n\geq1\). Since every finite time belongs to some interval
\([0,nh]\), this defines a global solution for all \(t\geq0\).
Uniqueness on each short interval implies uniqueness on the whole positive
time axis.

Finally, because the total rate of each representative neuron is bounded by
$
\gamma+3,
$
the resulting nonlinear process is non-explosive.
This completes the proof.
\end{proof}

\section{Invariant measures and absence of active stationarity}

The reinforcement counter \(K\) is increased by one at every firing and is
never decreased. Consequently, the full process
\[
X(t)=(V(t),S(t),K(t))
\]
does not admit a nontrivial stationary regime with positive firing activity.

We prove that every invariant probability measure is necessarily concentrated
on silent configurations.

\begin{theorem}
\label{thm:invariant-rmf}
Consider the nonlinear replica mean-field process with firing rate
$
\lambda(v,s,k)
=
\mathbf 1_{\{v>\alpha s^{+}\}},
$
leakage rate
$
\gamma\mathbf 1_{\{v>0\}}$,
 $\gamma>0,
$
and mean-field input rates
\[
a_i(\boldsymbol{\pi})
=
\beta_{i-1}(\pi_{i-1})
+
\beta_{i+1}(\pi_{i+1}),
\]
where
\[
\beta_i(\pi_i)
=
\int_E \lambda(v,s,k)\,\pi_i(dv,ds,dk).
\]

Let
$
\boldsymbol{\pi}
=
(\pi_1,\ldots,\pi_N)
$
be an invariant family of probability measures for the nonlinear RMF
dynamics. Then, for every \(i\),
\[
\beta_i(\pi_i)=0
\]
and
\[
\pi_i\bigl(\{(v,s,k)\in E:v=0\}\bigr)=1.
\]

Conversely, every family of probability measures
$
\boldsymbol{\pi}
=
(\pi_1,\ldots,\pi_N)
$
satisfying
\[
\pi_i(V=0)=1,
\qquad i=1,\ldots,N,
\]
is invariant. 
Thus the invariant probability measures are exactly the probability measures
supported on silent states:
\[
\boxed{
\operatorname{supp}(\pi_i)
\subseteq
\{0\}\times\mathbb Z\times\mathbb N^{*}.
}
\]
\end{theorem}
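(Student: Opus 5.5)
The plan is to read invariance of $\boldsymbol\pi$ in the weak sense of Theorem~\ref{thm:existence-uniqueness-rmf}: the constant family $\boldsymbol\mu(t)\equiv\boldsymbol\pi$ solves the nonlinear weak equation. Since the input rates $a_i(\boldsymbol\pi)$ are then time independent, differentiating the weak equation in $t$ shows this is equivalent to
\[
\int_E \mathcal A_{i,\boldsymbol\pi}f\,d\pi_i=0
\]
for every bounded $f:E\to\mathbb R$ and every $i$. The whole argument consists of choosing good bounded test functions. Unbounded coordinates such as $k$ cannot be used directly, because $\pi_i$ need not have finite moments. I therefore work with truncations, as in Proposition~\ref{prop:moment-identities}, but here no integrability is required.

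\textbf{Step 1: the firing rate vanishes.} Take $f_R(v,s,k)=k\wedge R$. Leakage and input do not change $k$. A firing changes $k$ to $k+1$, so
\[
\mathcal A_{i,\boldsymbol\pi}f_R=\lambda(v,s,k)\mathbf 1_{\{k<R\}}\geq0 .
\]
Invariance then gives $\int\lambda\,\mathbf 1_{\{k<R\}}\,d\pi_i=0$ for every $R$. Letting $R\to\infty$ by monotone convergence yields $\beta_i(\pi_i)=0$ for every $i$. This is the rigorous version of the heuristic $\tfrac{d}{dt}\mathbb E[K_i]=\beta_i$: a counter that only increases cannot be stationary unless it never moves.

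\textbf{Step 2: the potential is zero.} By Step 1, $a_i(\boldsymbol\pi)=0$ for all $i$. Take $g(v,s,k)=\mathbf 1_{\{v>0\}}$. The input term is absent. Leakage contributes $-\gamma\mathbf 1_{\{v>0\}}$. A firing lands at $v=0$ and so contributes $-\lambda\,\mathbf 1_{\{v>0\}}$. Invariance therefore gives
\[
0=-\int(\gamma+\lambda)\mathbf 1_{\{v>0\}}\,d\pi_i\le-\gamma\,\pi_i(V>0).
\]
Since $\gamma>0$, this forces $\pi_i(V=0)=1$.

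\textbf{Step 3: the converse.} Suppose $\pi_i(V=0)=1$ for every $i$. Then $\lambda(0,s,k)=\mathbf 1_{\{0>\alpha s^+\}}=0$, so $\beta_i(\pi_i)=0$ and hence $a_i=0$. At any point $(0,s,k)$, every term of $\mathcal A_{i,\boldsymbol\pi}f$ vanishes:
\begin{itemize}
\item the leakage indicator is $0$;
\item the firing rate is $0$;
\item the input rate is $0$.
\end{itemize}
Thus $\mathcal A_{i,\boldsymbol\pi}f=0$ $\pi_i$-almost surely, and the constant family solves the weak equation. By the uniqueness part of Theorem~\ref{thm:existence-uniqueness-rmf}, it is the RMF solution started from $\boldsymbol\pi$, so $\boldsymbol\pi$ is invariant. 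The support statement is just a restatement of $\pi_i(V=0)=1$.

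\textbf{Main obstacle.} I expect the main delicacy to be Step 1: a stationarity argument on $K$ cannot be run naively, since $\pi_i$ may have infinite mean in $k$. The truncation $k\wedge R$ handles this cleanly, because its generator is nonnegative and vanishes only where there is no firing. A secondary point is to state clearly that invariance in the nonlinear setting means the constant path is a solution. With the rates frozen at $a_i(\boldsymbol\pi)$, this reduces to the linear condition $\int\mathcal A_{i,\boldsymbol\pi}f\,d\pi_i=0$.
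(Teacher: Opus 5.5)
Your proof is correct and follows the paper's argument. The truncated counter $k\wedge R$ forces $\beta_i(\pi_i)=0$, hence $a_i(\boldsymbol\pi)=0$. An indicator of the potential together with $\gamma>0$ then gives $\pi_i(V=0)=1$. The converse holds because every term of $\mathcal A_{i,\boldsymbol\pi}f$ vanishes on $\{v=0\}$.

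The differences are only cosmetic. The paper tests with $\mathbf 1_{\{v=0\}}$ and discards the firing term using $\lambda=0$ $\pi_i$-almost surely, whereas you use $\mathbf 1_{\{v>0\}}$ and keep the firing term with its favourable sign. For the converse, the paper argues pathwise that no event can occur, while you check the weak equation directly; under your definition of invariance, the appeal to uniqueness there is not even needed.
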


\begin{proof}
Suppose that
$
\boldsymbol{\pi}
=
(\pi_1,\ldots,\pi_N)
$
is invariant. 
For \(n\geq1\), define the bounded test function
\[
f_n(v,s,k)=k\wedge n.
\]
Leakage and incoming spikes do not change \(k\). At a firing event,
$
k\longmapsto k+1.
$
Therefore,
\begin{align*}
\mathcal A_{i,\boldsymbol{\pi}}f_n(v,s,k)
=
\lambda(v,s,k)
\left[
(k+1)\wedge n-k\wedge n
\right]
=
\lambda(v,s,k)\mathbf 1_{\{k<n\}}.
\end{align*}

Since \(\pi_i\) is invariant and \(f_n\) is bounded,
\[
\int_E
\mathcal A_{i,\boldsymbol{\pi}}f_n(v,s,k)
\,\pi_i(dv,ds,dk)
=
0.
\]
Hence
\[
\int_E
\lambda(v,s,k)\mathbf 1_{\{k<n\}}
\,\pi_i(dv,ds,dk)
=
0.
\]

The integrand is nonnegative. Therefore,
\[
\lambda(v,s,k)\mathbf 1_{\{k<n\}}=0
\qquad
\pi_i\text{-almost surely}.
\]
Letting \(n\to\infty\), monotone convergence gives
\[
\int_E\lambda(v,s,k)\,\pi_i(dv,ds,dk)=0.
\]
Thus
\[
\boxed{
\beta_i(\pi_i)=0.
}
\]

This holds for every neuron type \(i\). Consequently, all mean-field input
rates vanish:
\[
a_i(\boldsymbol{\pi})
=
\beta_{i-1}(\pi_{i-1})
+
\beta_{i+1}(\pi_{i+1})
=
0.
\]

We next prove that the potential must vanish almost surely.

Consider the bounded test function
\[
g(v,s,k)=\mathbf 1_{\{v=0\}}.
\]
Since
\[
\lambda(v,s,k)=0
\qquad
\pi_i\text{-almost surely},
\]
and since \(a_i(\boldsymbol{\pi})=0\), only the leakage transition contributes
to the generator.

If \(v>0\), leakage sends \(v\) to \(0\), and therefore
\[
g(0,s,k)-g(v,s,k)=1.
\]
If \(v=0\), no leakage occurs. Hence
\[
\mathcal A_{i,\boldsymbol{\pi}}g(v,s,k)
=
\gamma\mathbf 1_{\{v>0\}}.
\]

By invariance,
\[
0
=
\int_E
\mathcal A_{i,\boldsymbol{\pi}}g(v,s,k)
\,\pi_i(dv,ds,dk),
\]
and therefore
\[
0
=
\gamma\pi_i(V>0).
\]
Since \(\gamma>0\),
\[
\pi_i(V>0)=0.
\]
Thus
\[
\boxed{
\pi_i(V=0)=1.
}
\]

This proves that every invariant family is supported on silent
configurations.

Conversely, suppose that
\[
\pi_i(V=0)=1
\]
for every \(i\). Then
\[
\lambda(0,s,k)
=
\mathbf 1_{\{0>\alpha s^{+}\}}
=
0,
\]
so
\[
\beta_i(\pi_i)=0.
\]
Hence all mean-field input rates are zero:
$
a_i(\boldsymbol{\pi})=0.
$

Moreover, at \(v=0\),

\[
\gamma\mathbf 1_{\{v>0\}}=0
\]
and
\[
\lambda(v,s,k)=0.
\]
Thus no leakage, firing, reinforcement, or incoming spike can occur. The
process remains at its initial state for all times. Therefore every
probability measure supported on \(V=0\) is invariant.
\end{proof}

\begin{Corollary}
There is no invariant probability measure \(\boldsymbol{\pi}\) satisfying
\[
\beta_i(\pi_i)>0
\]
for at least one neuron type \(i\).

In particular, the full reinforced RMF process has no stationary regime with
a positive firing rate.
\end{Corollary}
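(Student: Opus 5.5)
The corollary follows directly from Theorem~\ref{thm:invariant-rmf}, so the plan is a short argument by contradiction. Suppose $\boldsymbol{\pi}=(\pi_1,\ldots,\pi_N)$ is an invariant family for the nonlinear RMF dynamics with $\beta_i(\pi_i)>0$ for some type $i$. The first part of Theorem~\ref{thm:invariant-rmf} says that every invariant family satisfies $\beta_j(\pi_j)=0$ for all $j$, which is a contradiction. So no such invariant family exists.

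To make the argument self-contained, I would recall the mechanism behind that conclusion. Test invariance against the bounded truncated counter $f_n(v,s,k)=k\wedge n$. Its generator is $\lambda(v,s,k)\mathbf 1_{\{k<n\}}\ge 0$, and invariance forces its $\pi_i$-integral to vanish. Monotone convergence in $n$ then gives $\int\lambda\,d\pi_i=0$. This is exactly the statement that a nondecreasing counter incremented at rate $\beta_i$ cannot have a stationary law unless $\beta_i=0$.

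For the ``in particular'' clause, I would interpret a stationary regime with positive firing rate as a stationary solution $\boldsymbol{\mu}(t)\equiv\boldsymbol{\pi}$ of the weak nonlinear equation from Theorem~\ref{thm:existence-uniqueness-rmf}, with some $\beta_i(t)\equiv\beta_i(\pi_i)>0$. Constancy in time means $\int\mathcal A_{i,\boldsymbol{\pi}}f\,d\pi_i=0$ for every bounded $f$. That is precisely the invariance condition used in Theorem~\ref{thm:invariant-rmf}, so the first part of the corollary rules it out.

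I expect no real obstacle. The only point needing care is identifying ``stationary regime'' with ``invariant family'' in the McKean sense, where the generator is frozen at $\boldsymbol{\pi}$ itself. This identification holds because the weak equation with constant $\boldsymbol{\mu}$ reduces to $\int\mathcal A_{i,\boldsymbol{\pi}}f\,d\pi_i=0$.
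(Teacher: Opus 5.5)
Your proposal is correct and takes the same route as the paper: the paper also deduces the corollary immediately from the first part of Theorem~\ref{thm:invariant-rmf}, namely that $\beta_i(\pi_i)=0$ for every invariant family. Your recap of the $k\wedge n$ test-function argument and your identification of a stationary regime with a time-constant solution of the weak equation (so that $\int\mathcal A_{i,\boldsymbol\pi}f\,d\pi_i=0$) are accurate, and they make explicit what the paper leaves implicit.
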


\begin{proof}
The conclusion follows immediately from
Theorem~\ref{thm:invariant-rmf}, which implies
\[
\beta_i(\pi_i)=0
\]
for every invariant family.
\end{proof}

\begin{remark}
The obstruction to a nontrivial invariant probability measure is the
monotonicity of the counter \(K\):
\[
K(t)
=
K(0)+N^{\mathrm{fire}}(t),
\]
where \(N^{\mathrm{fire}}(t)\) is the number of firings up to time \(t\).

Thus, whenever the process fires persistently, \(K(t)\) tends to infinity.
A nontrivial stationary analysis would therefore require one of the following
modifications:

\begin{enumerate}
\item studying the reduced variables
$
(V,R),
$$
R=\frac{S}{K};
$

\item considering a process observed in the reinforcement-event time scale;

\item introducing forgetting or decay in the reinforcement variables;

\item studying quasi-stationary distributions conditioned on non-extinction;

\item considering stationary limits only for the potential marginal \(V\).
\end{enumerate}
\end{remark}

\section{Numerical experiments}

We simulate the finite reinforced network using an exact event-driven
algorithm. At each step, the waiting time to the next event is sampled from an
exponential distribution with parameter equal to the total intensity of all
enabled events, and the event is selected proportionally to its rate. In the
original undriven network, these events are firing and leakage. For the
sustained-input experiments below, each neuron additionally receives an
independent homogeneous Poisson input of rate \(\eta\). At an external input
time,
$
V_i\longmapsto V_i+1.
$
This external input is used only in the numerical adaptation study. All
analytical results in the preceding sections concern the original undriven
model, corresponding to \(\eta=0\).

\subsection{Firing-rate adaptation under sustained input}

We first examine whether Elephant memory generates firing-rate adaptation
under sustained stimulation. We take
\[
N=100,
\qquad
\alpha=0.5,
\qquad
\gamma=0.2,
\qquad
\eta=0.4,
\]
with initial condition
\[
V_i(0)=0,
\qquad
S_i(0)=K_i(0)=1,
\qquad
i=1,\ldots,N.
\]
Each simulation is run up to time \(T_{\max}=300\), and firing rates are
evaluated in time windows of length \(\Delta=5\). For each parameter
configuration, we perform \(50\) independent simulations.

We compare the reinforced system for
\[
p\in\{0.6,0.75,0.9\}
\]
with a no-memory control in which the reinforcement updates are disabled and
the firing threshold is fixed at zero. All systems receive external input at
the same rate. For a time window \((t-\Delta,t]\), the population firing rate
is defined by
\[
r(t)
=
\frac{
\text{number of firings in }(t-\Delta,t]
}{
N\Delta
}.
\]
For the reinforced systems, we also record the mean effective firing threshold
\[
\overline{\theta}(t)
=
\frac{\alpha}{N}
\sum_{i=1}^{N}S_i^+(t).
\]

\begin{figure}
\centering
\includegraphics[width=0.75\textwidth]
{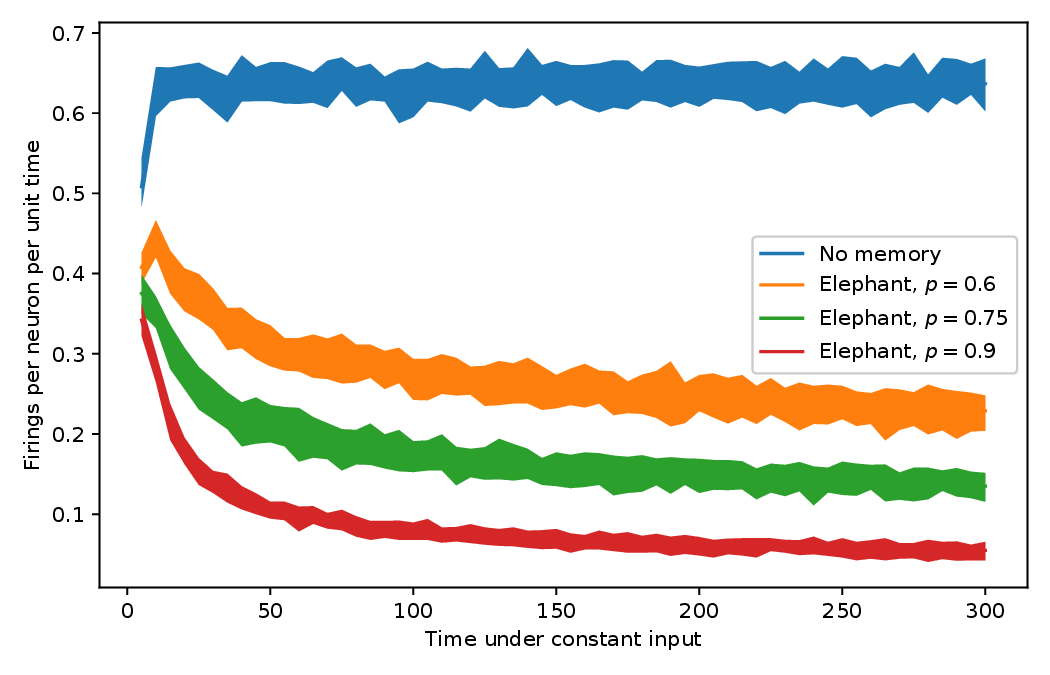}
\caption{Mean population firing rates under sustained homogeneous input. The
no-memory control has a firing threshold fixed at zero, while the reinforced
systems use \(\alpha=0.5\) and the indicated values of \(p\). The shaded
regions represent empirical interquartile ranges over \(50\) realizations.}
\label{fig:driven-adaptation-rates}
\end{figure}

The no-memory control maintains an approximately constant firing rate, whereas
the Elephant systems display a progressive decline in activity. This decline
becomes stronger as \(p\) increases.
Figure~\ref{fig:driven-adaptation-thresholds} shows that the reduction in
firing is accompanied by growth of the effective threshold.

\begin{figure}
\centering
\includegraphics[width=0.75\textwidth]
{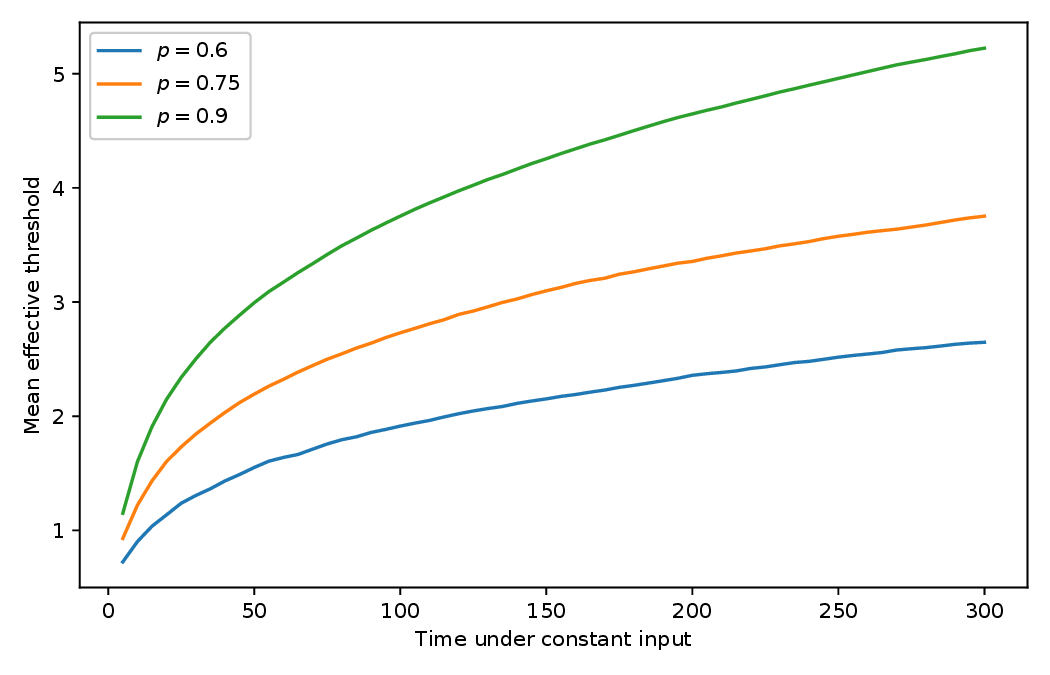}
\caption{Mean effective firing threshold
\(\overline{\theta}(t)\) under sustained homogeneous input for the reinforced
systems. Each curve is averaged over \(50\) realizations.}
\label{fig:driven-adaptation-thresholds}
\end{figure}

To summarize the magnitude of adaptation, for each realization we define the
early firing rate as the average over
\[
10\leq t\leq50
\]
and the late firing rate as the average over
\[
250\leq t\leq300.
\]
The relative decline in that realization is
\[
1-
\frac{\text{late firing rate}}
{\text{early firing rate}}.
\]
Table~\ref{tab:driven-adaptation} reports the averages over the \(50\)
realizations. The early and late thresholds are averaged over the same time
intervals.

\begin{table}
\centering
\begin{tabular}{c|c|c|c|c|c}
\hline
Condition &
Early rate &
Late rate &
Relative decline &
Early threshold &
Late threshold
\\
\hline
No memory & \(0.6354\) & \(0.6369\) & \(-0.3\%\) & \(0\) & \(0\) \\
\(p=0.6\) & \(0.3604\) & \(0.2298\) & \(36.1\%\) & \(1.27\) & \(2.59\) \\
\(p=0.75\) & \(0.2576\) & \(0.1408\) & \(45.4\%\) & \(1.79\) & \(3.66\) \\
\(p=0.9\) & \(0.1604\) & \(0.0560\) & \(65.3\%\) & \(2.42\) & \(5.10\) \\
\hline
\end{tabular}
\caption{Early and late firing rates and mean effective thresholds under
sustained input. Each row is based on \(50\) independent simulations.}
\label{tab:driven-adaptation}
\end{table}

Under the same sustained input, the no-memory system remains approximately
stationary. In contrast, Elephant reinforcement produces a persistent decline
in firing activity accompanied by growth of the effective threshold. Since
the initial reinforcement is positive, larger values of \(p\) favour further
positive reinforcement increments, producing stronger threshold accumulation
and a larger reduction in firing activity.

\subsection{Temporal profile of adaptation}

We next compare the temporal profile of the mean adaptation curves with a
single-exponential decay and a power law. For each value of \(p\), the mean
firing-rate trajectory from time \(t=10\) onward is fitted with
\[
r_{\mathrm{exp}}(t)
=
c+a\exp\left(-\frac{t}{\tau}\right)
\]
and
\[
r_{\mathrm{pow}}(t)
=
c+a(t+1)^{-\beta}.
\]
Both models have three fitted parameters. The fits are compared using the
Akaike information criterion, and their stability is assessed by \(200\)
bootstrap resamples of the \(50\) simulation runs for each value of \(p\).

\begin{figure}
\centering
\includegraphics[width=0.75\textwidth]
{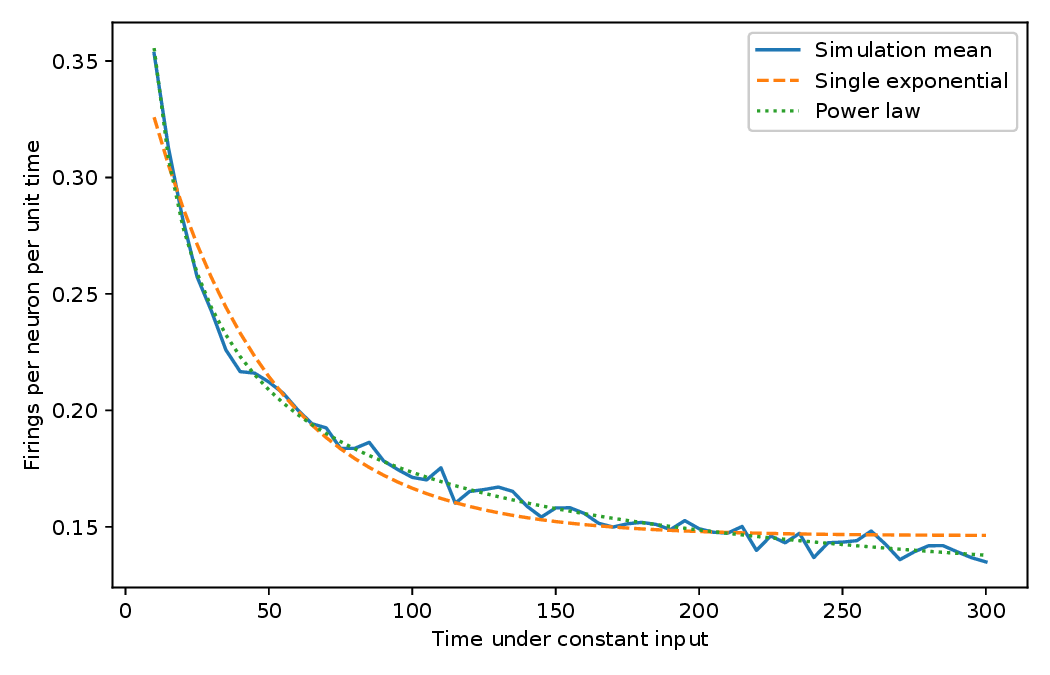}
\caption{Single-exponential and power-law fits to the mean firing-rate
trajectory for \(p=0.75\), using data from \(t=10\) onward.}
\label{fig:adaptation-fit}
\end{figure}

Table~\ref{tab:adaptation-fits} reports the fitted power-law exponent, the
single-exponential time constant, and
\[
\Delta\mathrm{AIC}
=
\mathrm{AIC}_{\mathrm{exp}}
-
\mathrm{AIC}_{\mathrm{pow}}.
\]
Positive values of \(\Delta\mathrm{AIC}\) favour the power-law description.

\begin{table}
\centering
\begin{tabular}{c|c|c|c|c}
\hline
\(p\) &
Power exponent \(\beta\) &
Exponential time \(\tau\) &
\(\Delta\mathrm{AIC}\) &
Bootstrap fraction favouring power law
\\
\hline
\(0.6\)  & \(0.3149\) & \(56.47\) & \(81.72\)  & \(1.00\) \\
\(0.75\) & \(0.5327\) & \(41.39\) & \(104.05\) & \(1.00\) \\
\(0.9\)  & \(0.8154\) & \(27.37\) & \(183.47\) & \(1.00\) \\
\hline
\end{tabular}
\caption{Comparison of power-law and single-exponential descriptions of the
mean adaptation trajectories. The final column is the fraction of \(200\)
bootstrap resamples for which the power-law fit has lower AIC.}
\label{tab:adaptation-fits}
\end{table}

Over the simulated interval, the power-law form provides a substantially
better description of the mean adaptation trajectories than a single
exponential. The power-law fit has lower AIC for every value of \(p\) and in
all \(200\) bootstrap resamples.

\subsection{Effect of the Elephant parameter in the undriven network}

We now return to the original network without external input. Unless otherwise
stated, we take
\[
N=100,
\qquad
V_i(0)=S_i(0)=K_i(0)=1,
\qquad
i=1,\ldots,N.
\]
For each parameter configuration, we perform \(20\) independent simulations
up to time \(T_{\max}=300\). Since the network receives no external input, the
state in which all membrane potentials vanish is absorbing. We define the
extinction time by
\[
\tau_{\mathrm{ext}}
=
\inf
\left\{
t\geq0:
V_i(t)=0
\text{ for every }i
\right\}.
\]

We examine the effect of the memory parameter \(p\), fixing
\[
\alpha=0.5,
\qquad
\gamma=0.2.
\]
Table~\ref{tab:p-results} reports the median extinction time, its empirical
interquartile range, and the mean total number of firings. All simulated
networks became extinct before \(T_{\max}\).

\begin{table}
\centering
\begin{tabular}{c|c|c}
\hline
\(p\) &
Median extinction time
\([Q_{0.25},Q_{0.75}]\) &
Mean total firings
\\
\hline
\(0.1\) & \(135.29\,[108.69,145.95]\) & \(2187.25\) \\
\(0.3\) & \(90.79\,[71.33,111.26]\)   & \(1327.05\) \\
\(0.5\) & \(72.43\,[53.99,85.00]\)    & \(728.85\)  \\
\(0.7\) & \(32.37\,[28.15,41.56]\)    & \(294.55\)  \\
\(0.9\) & \(28.06\,[24.67,33.62]\)    & \(176.15\)  \\
\hline
\end{tabular}
\caption{Extinction times and total firing activity for different values of
the Elephant parameter \(p\). Each row is based on \(20\) independent
simulations.}
\label{tab:p-results}
\end{table}

Both the extinction time and the total number of firings decrease markedly as
\(p\) increases. Since the initial reinforcement is positive, larger values of
\(p\) favour further positive increments of the reinforcement variable. This
raises the firing threshold and progressively suppresses network activity.
For \(p<1/2\), increments opposing the current reinforcement sign are favoured,
which weakens this threshold accumulation and prolongs the active transient.

\subsection{Dependence on the threshold and leakage parameters}

We next vary
\[
\alpha\in\{0.25,0.5,0.75,1\},
\qquad
\gamma\in\{0.1,0.2,0.4,0.8\},
\]
while fixing \(p=0.7\). Figure~\ref{fig:alpha-gamma} displays the median total
number of firings over \(20\) independent simulations for each parameter pair.

\begin{figure}
\centering
\includegraphics[width=0.72\textwidth]
{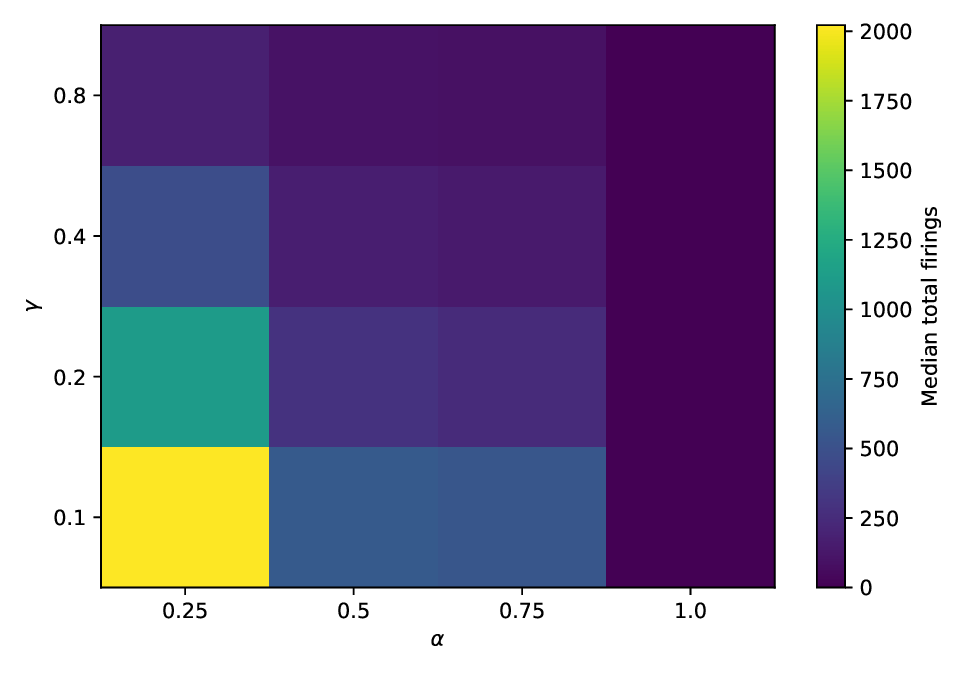}
\caption{Median total number of firings as a function of the threshold
coefficient \(\alpha\) and the leakage rate \(\gamma\), with \(p=0.7\) and
\(N=100\).}
\label{fig:alpha-gamma}
\end{figure}

Increasing either parameter reduces the total firing activity. For
\(\alpha=0.25\), the median number of firings decreases from \(2022\) at
\(\gamma=0.1\) to \(180\) at \(\gamma=0.8\). At \(\gamma=0.2\), increasing
\(\alpha\) from \(0.25\) to \(0.75\) reduces the median from \(1110\) to
\(250\). When \(\alpha=1\), the chosen initial condition satisfies
\[
V_i(0)=\alpha S_i^+(0),
\]
and the strict firing condition
\[
V_i(0)>\alpha S_i^+(0)
\]
fails. Consequently, the network produces no firing events in this case.

\subsection{Finite-network and RMF trajectories}

Finally, we compare the undriven finite network with a particle approximation
of the spatially homogeneous nonlinear RMF process. The empirical RMF firing
intensity is
\[
\widehat\beta^Q(t)
=
\frac{1}{Q}
\sum_{q=1}^{Q}
\mathbf 1_{\{
V^q(t)>\alpha(S^q(t))^+
\}},
\]
and each particle receives inputs at rate \(2\widehat\beta^Q(t)\). We use
\[
Q=1000,
\qquad
p=0.7,
\qquad
\alpha=0.5,
\qquad
\gamma=0.2,
\]
and average \(20\) realizations of each system. Firing rates are evaluated in
time windows of length \(\Delta=2\).

\begin{figure}
\centering
\includegraphics[width=0.75\textwidth]
{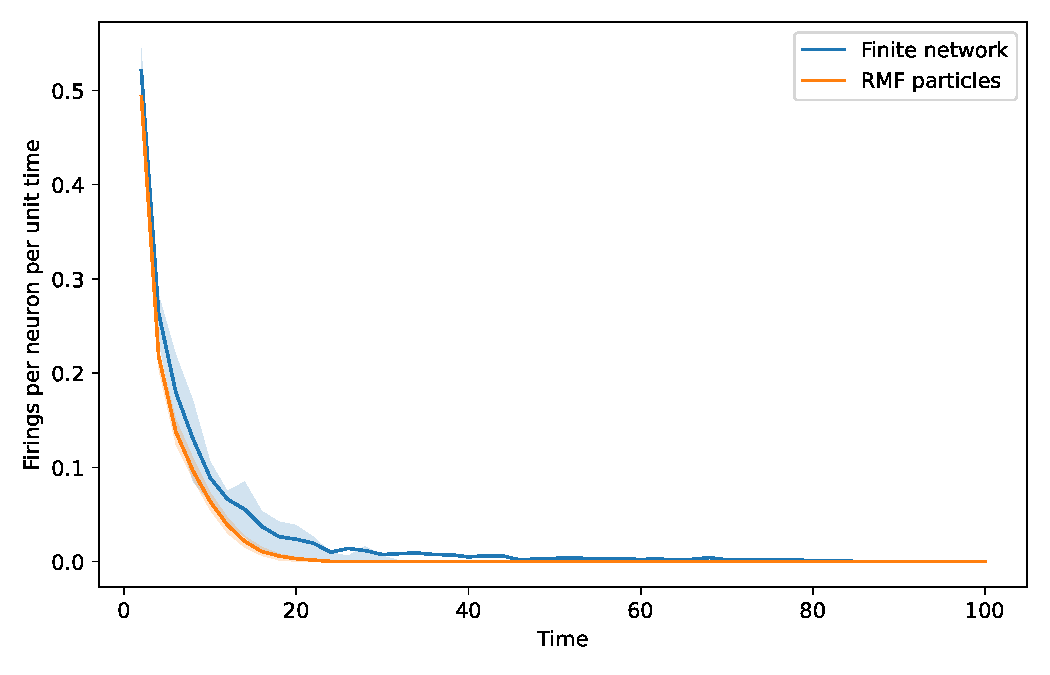}
\caption{Mean population firing rates for the undriven finite network and the
RMF particle approximation. The shaded regions represent empirical
interquartile ranges over \(20\) realizations.}
\label{fig:finite-rmf}
\end{figure}

The two systems exhibit the same qualitative transient: an initially high
firing rate followed by rapid decay toward the silent state. The peak mean
rates are \(0.5205\) for the finite network and \(0.49345\) for the RMF
approximation. Over the interval \([0,100]\), the discrete approximation of
the integrated absolute difference between the two mean firing-rate curves is
\[
0.9206.
\]
The finite system retains slightly more activity during part of the transient,
but the RMF approximation reproduces the overall decay profile.

All simulations were implemented in Python 3.13.5 using NumPy 2.3.5,
SciPy 1.17.0, Matplotlib 3.10.8, and Numba 0.65.1. Explicit fixed random seeds
are specified in the simulation scripts. The scripts, raw simulation outputs,
parameter summaries, and figure-generation files are provided with the
supplementary numerical material.

\section{Discussion}

Taken together, these results show that the model can represent biologically
relevant forms of activity-dependent response modulation. Under sustained
homogeneous input, the no-memory control maintains an approximately stationary
firing rate, whereas the reinforced system exhibits a progressive decline in
activity. Because this decline is accompanied by an accumulated increase in
the effective firing threshold, the model reproduces both spike-frequency
adaptation and activity-dependent threshold adaptation. The dependence on
\(p\) shows that the strength of these effects is controlled by how
persistently previous spikes modify excitability. The model is therefore
particularly relevant for neuronal systems in which repeated firing produces
a lasting reduction in responsiveness that cannot be explained by the
instantaneous membrane potential alone.

The simulations show that this memory can strongly regulate both the duration
and the intensity of network activity. With positive initial reinforcement,
larger values of \(p\) favour the persistence of positive reinforcement,
causing the firing threshold to increase more rapidly. The resulting effect is
a cumulative negative feedback: firing reduces the probability of future
firing. Under sustained input, larger values of \(p\) produce a stronger
decline in firing activity and a greater increase in the effective threshold.
In the undriven network, they also produce shorter extinction times and
substantially fewer spikes, whereas \(p<1/2\) favours reversals of the current
reinforcement tendency and prolongs the active transient.

Over the simulated time interval, the adaptation trajectories are better
described by a power law than by a single-exponential relaxation. This
indicates that the cumulative reinforcement rule can generate an extended
temporal response without introducing several adaptation variables with
separately prescribed decay constants. The comparison is restricted to the
finite time interval and parameter values considered here and does not by
itself establish an asymptotic power law. Moreover, the reinforcement variable
should be interpreted as a parsimonious representation of
spike-history-dependent excitability rather than as a model of a particular
ionic or molecular mechanism.

The parameters \(\alpha\) and \(\gamma\) control complementary mechanisms.
The coefficient \(\alpha\) determines how strongly firing history affects
excitability, while \(\gamma\) determines how rapidly membrane potential is
lost. Increasing either parameter suppresses activity in the undriven
network, showing that recurrent firing may be limited by strong
history-dependent adaptation, rapid membrane relaxation, or both. The
finite-network and RMF simulations also display closely related transient
firing-rate profiles under matched parameter settings, supporting the use of
the nonlinear RMF process as a qualitative approximation of the finite
network dynamics.

The characterization of invariant measures concerns the original undriven
model and shows that cumulative reinforcement ultimately selects silent
stationary configurations. The sustained-input simulations instead examine
adaptation during externally maintained activity and therefore do not
contradict the absence of active invariant measures in the undriven system.
Introducing recovery or decay of the reinforcement state would be a natural
extension for studying post-stimulus recovery, repeated stimulation, and
possible active stationary regimes.

\end{document}